\begin{document}
\pagenumbering{arabic}
\renewcommand{\nomname}{Symbolverzeichnis}
\setlength{\nomlabelwidth}{.25\hsize}
\renewcommand{\nomlabel}[1]{#1 \dotfill}
\setlength{\nomitemsep}{-\parsep}
\newtheorem{satz}{Theorem}[section]
\newtheorem{lemma}[satz]{Lemma}
\newtheorem{prop}[satz]{Proposition}
\newtheorem{kor}[satz]{Corollary}
\theoremstyle{definition}
\newtheorem{defi}{Definition}[section]
\title{Solvability of Generalized Monomial Groups}
\date{}
\author{Joachim K\"onig\\
{\normalsize Institut f\"ur Mathematik der Universit\"at W\"urzburg}\\
{\normalsize Am Hubland, 97074 W\"urzburg} \\ 
{\normalsize joachim.koenig@mathematik.uni-wuerzburg.de}}
\maketitle

\abstract{The solvability of monomial groups is a well-known result in character theory. Certain properties of Artin $L$-series suggest a generalization of these groups, namely to such groups where every irreducible character has some multiple which is induced from a character $\varphi$ of $U$ with solvable factor group $U/ker(\varphi)$. Using the classification of finite simple groups, we prove that these groups are also solvable. This means in particular that the mentioned properties do not enable one to deduce a proof of the famous Artin conjecture for any non-solvable group from a possible proof for solvable groups.}
\renewcommand{\contentsname}{Table of contents}
\tableofcontents
\newpage
\section{Introduction}
Let $G$ be a finite group\footnote{Throughout this paper all groups will be assumed to be finite.}. Then a character $\chi$ of $G$ is called monomial if $\chi = \phi^G$ for a linear character of some subgroup $U \leq G$. If all irreducible characters of $G$ are monomial, $G$ is called monomial.\\
A classical theorem by Taketa (see for example \cite{Is}, Cor. (5.13)) states that monomial groups are solvable.\\
There are several generalizations of Taketa's result, e.g. the statement that if all the inducing subgroups $U$ have the property that the composition factors of $U/ker(\varphi)$ are in a given set of simple groups, then also the composition factors of $G$ lie in this set. In particular, Taketa's argument can easily be generalized to the case that $U/ker(\varphi)$ is always solvable.\\
\\
In this paper, we will consider a different generalization, namely the case that only some multiple of the character $\chi$ is "solvably induced":
\begin{defi}[\bf{QSI-Group}]
\label{qsi}
\ \\
Let $G$ be a finite group, $\chi \in Char(G)$. Then $\chi$ is called QSI (quasi solvably induced) from $U \leq G$ and $\varphi \in Irr(U)$, if there exists $k \in \mathbb{N}$ such that $k \chi = \varphi^G$, and $U/ker(\varphi)$ is solvable. \\
$G$ is called QSI, if all $\chi \in Irr(G)$ are QSI.
\vspace{4mm}
\end{defi}
\ \\
By definition, monomial groups are a special case of QSI groups, because there we always have $k=1$ and $\varphi$ linear, i.e. $U/ker(\varphi)$ abelian.\\
It is trivial that even all solvable groups are QSI (simply set $U := G$, $\varphi := \chi$ and $k := 1$). This paper aims at proving the converse of that statement, i.e.:
\\
\begin{satz}
\label{Hauptsatz}
Let $G$ be a finite QSI-group. Then $G$ is solvable.
\end{satz}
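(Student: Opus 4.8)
The plan is to argue by contradiction through a minimal counterexample, exploiting that the QSI property descends to quotients, and then to reduce to a statement about nonabelian simple groups that is settled with the classification.

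First I would show that QSI is inherited by quotients. The key computation is that for $\varphi\in \mathrm{Irr}(U)$ one has $\ker(\varphi^G)=\mathrm{core}_G(\ker\varphi)$: indeed $|\varphi^G(g)|$ attains its maximal value $[G:U]\varphi(1)$ precisely when every conjugate of $g$ lies in $\ker\varphi$. Hence, given $N\trianglelefteq G$ and any $\bar\chi\in \mathrm{Irr}(G/N)$ inflated to $\chi\in \mathrm{Irr}(G)$ with $N\le\ker\chi$, the QSI witness $(U,\varphi,k)$ of $\chi$ satisfies $N\le\ker\chi=\mathrm{core}_G(\ker\varphi)\le\ker\varphi\le U$. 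Thus $\varphi$ and $\chi$ deflate to $U/N$ and $G/N$, induction commutes with deflation, and $(U/N)/\ker\bar\varphi\cong U/\ker\varphi$ stays solvable, so $G/N$ is QSI.

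Now let $G$ be a counterexample of least order. Every proper quotient is a QSI-group of smaller order, hence solvable. Two distinct minimal normal subgroups $N_1,N_2$ would embed $G$ into the solvable group $G/N_1\times G/N_2$, so $G$ has a \emph{unique} minimal normal subgroup $N$; as $G/N$ is solvable, $N$ cannot be abelian, so $N=S_1\times\cdots\times S_t$ with each $S_i\cong S$ a fixed nonabelian simple group. Since $C_G(N)\cap N=Z(N)=1$, uniqueness of $N$ forces $C_G(N)=1$, whence $G$ embeds into $\mathrm{Aut}(S)\wr\mathrm{Sym}(t)$ with socle $N$. It remains to contradict the nonabelianness of $N$. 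For this I would pick $\chi\in \mathrm{Irr}(G)$ with $N\not\le\ker\chi$ (such $\chi$ exist, since otherwise $N\le\bigcap_\psi\ker\psi=1$), lying over a nonprincipal $\theta\in \mathrm{Irr}(N)$, and write $k\chi=\varphi^G$ with $Q:=U/\ker\varphi$ solvable. Minimality of $N$ gives $N\cap\ker\chi=N\cap\mathrm{core}_G(\ker\varphi)=1$, and Mackey's formula for $(\varphi^G)_N$ (using $N\trianglelefteq G$) ties the $G$-orbit of $\theta$ to characters of $N$ induced from $N\cap U$ through the solvable section $Q$. Because each $S_i$ is simple, the constituents forced on the factors are \emph{faithful} irreducible characters of $S$; the role of the homogeneity $\varphi^G=k\chi$ is to rigidify this correspondence so strongly that it becomes incompatible with $N$ being a perfect product of nonabelian simple groups, forcing $N$ to be solvable --- the desired contradiction.

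The main obstacle is precisely this final incompatibility. The reduction to a unique nonabelian minimal normal subgroup is routine, and the case $k=1$ is already covered by the solvably-induced version of Taketa's argument recalled in the introduction; what is genuinely new and hard is controlling the multiple $k>1$. I expect this to require detailed, classification-dependent information about the maximal solvable subgroups and the character degrees of each family of nonabelian simple groups, and I anticipate that essentially all of the real work, including any case analysis, will live in ruling out that a faithful character of $S$ can be produced homogeneously from a solvable section.
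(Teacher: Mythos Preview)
Your reduction to a minimal counterexample with a unique nonabelian minimal normal subgroup $N\cong S^t$ and $C_G(N)=1$ is correct and matches the paper exactly, as does the observation that QSI passes to quotients. Where your proposal diverges, and where the real gap lies, is in the mechanism for descending from $G$ to a statement purely about $S$. Picking an arbitrary $\chi\in\mathrm{Irr}(G)$ over an arbitrary nonprincipal $\theta\in\mathrm{Irr}(N)$ gives you a Clifford orbit of conjugates of $\theta$, and Mackey then produces a sum over double cosets; nothing forces a single clean statement about $S$ out of this. The paper's key move is instead to start on the $S$-side: choose an $\mathrm{Aut}(S)$-invariant $\psi\in\mathrm{Irr}(S)$, so that $\psi^{\otimes t}\in\mathrm{Irr}(N)$ is $G$-invariant; then any $\rho\in\mathrm{Irr}(G)$ lying over it restricts to a \emph{multiple} of $\psi^{\otimes t}$, and one short Mackey computation shows $\psi^{\otimes t}$ (hence $\psi$) is itself QSI in $N$ (resp.\ in $S$). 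This reduces the whole theorem to: every nonabelian simple $S$ has an $\mathrm{Aut}(S)$-invariant irreducible character that is not QSI.

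Your expectation about the endgame is also slightly misdirected. The inducing subgroup $U\le S$ need not be solvable (only $U/\ker\varphi$ is), so classifying maximal \emph{solvable} subgroups is not the right target. The operative lemma is elementary but sharp: if $k\chi=\varphi^S$ then $U$ must meet every conjugacy class on which $\chi$ is nonzero, and if $U$ is nonabelian simple then $\chi=1$. The case analysis then runs by choosing, for each $S$, an $\mathrm{Aut}(S)$-invariant character with nonzero values on elements of carefully chosen orders (for Lie type groups the Steinberg character, nonzero on all $p'$-elements, is the workhorse), and checking via the known maximal subgroup structure that no proper subgroup of $S$ can contain elements of all those orders --- descending through any nonabelian simple overgroups that do appear. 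That is where the classification enters, not through degrees or solvable sections per se.
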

\vspace{3mm}
\ \\
The phenomenon of "quasi-induced" characters in the sense of definition \ref{qsi} arises in a natural way in the study of Artin $L$-functions:\\
\\
Let $F|K$ be a Galois extension of number fields with group $G$, and $\chi \in Char(G)$ a character. Then one can define a certain number-theoretic function $L(s, \chi, F|K)$, called an Artin $L$-function, (where  $s \in \mathbb{C}$, $Re(s) > 1$). Artin $L$-functions are a generalization of Dirichlet $L$-functions, and as with the latter, the question arises if they can be continued homomorphically to $\mathbb{C}$.\\ \\
Artin's conjecture states that this is true for all finite groups $G$ and all $\chi \in Char(G)$ with $(\chi, 1) = 0$. (If $\chi = 1$, then the Artin $L$-function is just the zeta function of $K$, and therefore meromorphic with a simple pole at $s=1$.)\\
From the functional equation of Artin $L$-functions it follows that $L(s, \sum_{i_1}^n{\chi_i} , F|K) = \prod_{i=1}^n{L(s, \chi_i, F|K)}$, so it suffices to prove Artin's conjecture for all $1 \neq \chi \in Irr(G)$.\\ \\
Still the conjecture is only known to be true for certain special cases, e.g. for $1 \neq \chi$ linear.\\
It is also known that for a subgroup $U \leq G$ and a character $\varphi \in Char(U)$ there is a relation between the $L$-functions of $\varphi$ and of $\varphi^G$, namely 
\[L(s, \varphi^G, F|K) = L(s, \varphi, F|F^U)\]
where $F^U$ is the fixed field of $U$.\\
\\
Brauer proved that for the general case there is at least a meromorphic continuation on $\mathbb{C}$.\\
The proof is based mainly on Brauer's induction theorem:
\begin{itemize}
\item[] Let $1 \neq \chi \in Irr(G)$, then $\chi = \sum_{i=1}^{r}{z_i \cdot \varphi_i^G}$, where the $\varphi_i$ are linear characters of subgroups $U_i$ of $G$, and $z_i \in \mathbb{Z}$. But it is well-known that 
\[L(s, \chi, F|K) = \prod_{i}{(L(s, \varphi_i^G, F|K))^{z_i}}\]
and by the above properties each factor of this product is obviously meromorphic.
\end{itemize}
\vspace{5mm}
These first results can be used to prove the following
\begin{lemma}
Let $F|K$ be a Galois extension of number fields with group $G$, and $\chi \in Char(G)$ a {\bf quasi-monomial} character, i.e. $k \cdot \chi = \varphi^G$ for some $U \leq G$, $\varphi \in Irr(U)$ linear, and $k \in \mathbb{N}$.\\
If $(\chi, 1) = 0$, then $L(s, \chi, F|K)$ has a holomorphic continuation on $\mathbb{C}$.
\end{lemma}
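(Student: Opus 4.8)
The plan is to reduce the holomorphy of $L(s,\chi)$ to the already known linear case by exploiting the two functional identities quoted above. Since $k\chi = \varphi^G$, the passage from additivity to multiplicativity for Artin $L$-functions gives
\[
L(s,\chi,F|K)^k = L(s, k\chi, F|K) = L(s, \varphi^G, F|K),
\]
and the induction relation rewrites the right-hand side as $L(s,\varphi, F|F^U)$, where $U = \mathrm{Gal}(F|F^U)$ and $\varphi \in Irr(U)$ is the given \emph{linear} character. Thus everything comes down to understanding $L(s,\varphi, F|F^U)$ and then extracting a $k$-th root.

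First I would check that $\varphi \neq 1_U$. By Frobenius reciprocity, $(\varphi, 1_U) = (\varphi^G, 1_G) = k\,(\chi, 1_G) = 0$, so the hypothesis $(\chi,1)=0$ forces $\varphi$ to be a nontrivial linear character of $U$. The known case of Artin's conjecture for a nontrivial linear $\varphi$, recorded above, then applies directly to the extension $F|F^U$ and yields that $L(s,\varphi, F|F^U)$ is holomorphic on all of $\mathbb{C}$. Consequently $L(s,\chi,F|K)^k$ is holomorphic on $\mathbb{C}$.

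It remains to pass from $L(s,\chi)^k$ back to $L(s,\chi)$ itself, and this is the only genuinely non-formal step. By Brauer's theorem $L(s,\chi,F|K)$ is at least meromorphic on $\mathbb{C}$, so at every point $s_0 \in \mathbb{C}$ it has a well-defined order $\mathrm{ord}_{s_0} L(s,\chi) \in \mathbb{Z}$. Since $\mathrm{ord}_{s_0}\bigl(L(s,\chi)^k\bigr) = k\cdot \mathrm{ord}_{s_0} L(s,\chi)$ and the left-hand quantity is $\geq 0$ everywhere (its $k$-th power being holomorphic), we obtain $\mathrm{ord}_{s_0} L(s,\chi) \geq 0$ for all $s_0$; that is, $L(s,\chi,F|K)$ has no poles and is therefore holomorphic on $\mathbb{C}$. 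The point to handle with care is thus the observation that a meromorphic function whose $k$-th power is holomorphic can itself have no poles — equivalently, that a meromorphic $k$-th root of a holomorphic function is again holomorphic — which is precisely what renders the multiplicity $k$ in the quasi-monomial hypothesis harmless.
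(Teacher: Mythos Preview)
Your proof is correct and follows essentially the same route as the paper: both use $L(s,\chi)^k = L(s,\varphi,F|F^U)$, the nontriviality of $\varphi$, the known linear case, and Brauer's meromorphy to extract holomorphy of $L(s,\chi)$. You have simply made explicit the Frobenius reciprocity behind ``$\varphi\neq 1$'' and the order-of-vanishing argument behind ``together with Brauer's result the same follows,'' which the paper leaves to the reader.
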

\begin{proof}
Obviously $\varphi \neq 1$. Therefore $L(s, \chi, F|K)^k = L(s, k \cdot \chi, F|K) = L(s, \varphi^G, F|K)  = L(s, \varphi, F|F^U)$ has a holomorphic continuation on $\mathbb{C}$, and together with Brauer's result mentioned above the same follows for  $L(s, \chi, F|K)$.
\end{proof}
\vspace{3mm}
\ \\
This argument obviously works whenever Artin's conjecture is known to be true for $L(s, \varphi, F|F^U)$.\\
For example, Langlands and Tunnell (\cite{La}, \cite{Tu}) proved the conjecture for characters $\varphi$ degree 2, with $U/ker(\varphi)$ solvable. In this case $U/ker(\varphi)$ is a cyclic central extension of a finite solvable subgroup of $PGL(2, \mathbb{C})$, and it is well-known that these finite subgroups are either cyclic, dihedral or isomorphic to $A_4$ or $S_4$.\\ \\
\ \\
The result of theorem \ref{Hauptsatz} can therefore be interpreted in the following way:\\
Even if Artin's conjecture were known to be true for all solvable groups, the methods shown above will not enable one to prove the conjecture for a single non-solvable group.\\
\vspace{4mm}

\section{Reduction to finite simple groups}
As the proof of Taketa's theorem essentially uses the fact that $k=1$ always holds, a proof of theorem \ref{Hauptsatz} requires different ideas.\\
In this chapter we will reduce the general problem outlined in section 1 to the investigation of finite simple groups. After this we can attack the problem case by case, thanks to the classification of finite simple groups.\\
\\
First, a simple observation:
\begin{lemma}
If $G$ is QSI, and $N \trianglelefteq G$, then also $G/N$ is QSI.
\end{lemma}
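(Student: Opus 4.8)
The plan is to use the standard correspondence between the irreducible characters of $G/N$ and those irreducible characters of $G$ whose kernel contains $N$. Concretely, given $\bar\chi \in \mathrm{Irr}(G/N)$, I would let $\chi \in \mathrm{Irr}(G)$ be its inflation, so that $N \le \ker\chi$. Applying the QSI hypothesis to $\chi$ yields some $k \in \mathbb{N}$, a subgroup $U \le G$ and $\varphi \in \mathrm{Irr}(U)$ with $k\chi = \varphi^G$ and $U/\ker\varphi$ solvable; the goal is to descend this entire datum to $G/N$.

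The heart of the argument is to show that $N \le \ker\varphi$, and in particular $N \le U$. Since $k\chi$ and $\chi$ have the same kernel, we have $N \le \ker\chi = \ker(\varphi^G)$. I would then use the description of the kernel of an induced character, $\ker(\varphi^G) = \mathrm{Core}_G(\ker\varphi) = \bigcap_{g\in G}(\ker\varphi)^g$, which can be read off from the induction formula $\varphi^G(x) = \frac{1}{|U|}\sum_{g^{-1}xg\in U}\varphi(g^{-1}xg)$ by comparing $|\varphi^G(x)|$ with $\varphi^G(1) = [G:U]\,\varphi(1)$ and noting that equality forces $g^{-1}xg \in \ker\varphi$ for every $g$. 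Since $\mathrm{Core}_G(\ker\varphi) \le \ker\varphi \le U$, it follows that both $N \le \ker\varphi$ and $N \le U$.

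With this in hand I would deflate to the quotient: set $\bar U := U/N \le G/N$ and let $\bar\varphi \in \mathrm{Irr}(U/N)$ be the character into which $\varphi$ factors (possible since $N \le \ker\varphi$, and irreducible since $\varphi$ is). The compatibility of induction with passage to a common normal quotient gives $\bar\varphi^{G/N} = \overline{\varphi^G}$, where the bar denotes the character of $G/N$ attached to a character of $G$ that is trivial on $N$; deflating $k\chi = \varphi^G$ then yields $k\bar\chi = \bar\varphi^{G/N}$. Finally $\bar U/\ker\bar\varphi = (U/N)/(\ker\varphi/N) \cong U/\ker\varphi$ is solvable, so $\bar\chi$ is QSI, and since $\bar\chi$ was arbitrary, $G/N$ is QSI. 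I expect the main obstacle to be the two standard but slightly technical character-theoretic ingredients used above — the identification of $\ker(\varphi^G)$ with the core of $\ker\varphi$, and the commutativity of induction with deflation to $G/N$; once these are in place the remainder is purely formal.
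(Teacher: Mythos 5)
Your proposal is correct and follows essentially the same route as the paper's proof: inflate $\bar\chi$ to $\chi$, use $N \le \ker(\varphi^G) = \bigcap_{g \in G}(\ker\varphi)^g$ to get $N \le \ker\varphi \le U$, deflate $\varphi$ to $U/N$, and invoke the compatibility of induction with passage to the quotient (which the paper verifies by direct computation with the induction formula, where you cite it as a standard fact). The only difference is cosmetic: you make explicit the transfer of solvability via $(U/N)/\ker\bar\varphi \cong U/\ker\varphi$, which the paper leaves implicit.
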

\begin{proof}
Every irreducible character $\widehat{\chi}$ of $G/N$ yields an irreducible character $\chi$ of $G$, with $N \trianglelefteq  ker(\chi)$. Now $\chi$ is QSI from $U \leq G$, i.e. $k \cdot \chi = \varphi^G$ for some $\varphi \in Irr(U)$. Thus $N \trianglelefteq ker(\varphi^G) = \bigcap_{g \in G}{ker(\varphi)^g}$,\\
i.e. $N \trianglelefteq ker(\varphi)$, and $\varphi$ corresponds to a character $\widehat{\varphi} \in Irr(U/N)$. But \[\widehat{\varphi}^{G/N}(xN) = \frac{1}{|U/N|} \cdot \sum_{\stackrel{yN \in G/N}{(xN)^y \in U/N}}{\varphi((xN)^y)} =\] \[ = \frac{1}{|U|} \cdot \sum_{\stackrel{y \in G}{x^y \in U}}{\varphi(x^y)} = \varphi^G(x) = k \cdot \chi(x) = k \cdot \widehat{\chi}(xN)
\]
and therefore $\widehat{\chi}$ is QSI.
\end{proof}
\vspace{10mm}
\begin{lemma}
Let $G$ be a minimal counterexample to the assertion of Theorem \ref{Hauptsatz}. Then $N \trianglelefteq G \leq Aut(N)$, where $N = S^m$, and $S$ is a non-abelian simple group.
\end{lemma}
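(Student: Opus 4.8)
The plan is to run a standard minimal-counterexample argument, exploiting that the QSI property descends to quotients (the previous lemma) even though it need not descend to subgroups. For this reason the entire argument is forced to proceed through quotients of $G$ rather than through subgroups.

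First I would record the consequence of minimality. If $1 \neq N \trianglelefteq G$, then $G/N$ is QSI by the previous lemma and has order strictly smaller than $|G|$; since $G$ is a minimal counterexample, $G/N$ cannot itself be a non-solvable QSI-group, hence $G/N$ is solvable. Thus \emph{every} proper quotient of $G$ is solvable.

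Next I would locate the subgroup $N$ of the statement. Choose $N$ to be a minimal normal subgroup of $G$. If $N$ were solvable, then $G$ would be an extension of the solvable group $N$ by the solvable group $G/N$, hence solvable, contradicting the choice of $G$; so $N$ is non-solvable. A minimal normal subgroup of a finite group is characteristically simple, hence a direct product $N = S^m$ of isomorphic simple groups, and non-solvability forces $S$ to be a non-abelian simple group. This yields the $N = S^m$ part of the assertion.

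Finally I would prove that $G$ embeds in $Aut(N)$, i.e. that $C_G(N) = 1$. Set $C := C_G(N) \trianglelefteq G$. Since $N$ is a direct product of non-abelian simple groups, $Z(N) = 1$, and therefore $N \cap C = Z(N) = 1$. If $C \neq 1$, then $G/C$ is a proper quotient and hence solvable by the first step; but then $N \cong NC/C \leq G/C$ would be solvable, a contradiction. Hence $C_G(N) = 1$, and conjugation gives an embedding $G \cong G/C_G(N) \hookrightarrow Aut(N)$ in which $N \trianglelefteq G$, as required. The only genuinely delicate point is the discipline of never passing to subgroups: centralizing $N$ and invoking the quotient $G/C_G(N)$ is precisely the device that keeps the whole argument within the reach of the quotient-stability of the QSI property.
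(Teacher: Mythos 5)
Your proposal is correct and takes essentially the same route as the paper: QSI passes to quotients, so minimality forces every proper quotient of $G$ to be solvable; a minimal normal subgroup $N = S^m$ is then non-solvable with $S$ non-abelian simple, and showing $C_G(N)=1$ yields the embedding $G \leq Aut(N)$. The only cosmetic difference is in the last step: the paper first proves $N$ is the \emph{unique} minimal normal subgroup and concludes that $C_G(N)$ is either trivial or contains $N$ (the latter being impossible), whereas you argue directly that a nontrivial $C := C_G(N)$ would embed the non-solvable group $N \cong NC/C$ into the solvable proper quotient $G/C$ --- the same contradiction the paper uses to establish uniqueness.
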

\begin{proof}
Let $G$ be such a minimal counterexample, i.e. $G$ is finite, nonsolvable, but QSI. Let $N$ be a minimal normal subgroup of $G$. Then $N = S^m$ for some finite simple group $S$. By the minimality of $G$, $G/N$ must be solvable (as $G/N$ is also  QSI). Therefore $N$ is nonsolvable, which means that $S$ is non-abelian. If there existed another minimal normal subgroup $\widehat{N}$, then $N \cap \widehat{N} = 1$, so $\widehat{N} \hookrightarrow G/N$, contrary to the solvability of $G/N$. So $N$ must be the unique minimal normal subgroup of $G$. As $C_G(N) \trianglelefteq G$, we have either $C_G(N) = 1$ or $C_G(N) \geq N$. The latter is obviously impossible.\\
So $G = N_G(N)/C_G(N) \leq Aut(N)$.
\end{proof}
\vspace{4mm}
\ \\
In the following, we will look at such possible minimal counterexamples $G$. Note that if $N = S^m$ with a non-abelian finite simple group $S$, then $Aut(N)$ is a semidirect product:\\ $Aut(N) \cong Aut(S)^m:Sym(m)$.\\ \\
With the following simple argument, which basically uses a special case of Mackey's decomposition formula, we can derive from the QSI-property of $G$ at least the QSI-property of certain characters of a normal subgroup $N$ of $G$:\\
\begin{lemma}
Let $N \triangleleft G$, and $\chi \in Irr(N)$ be a $G$-invariant character (e.g. a character which is uniquely determined by its degree), such that there exists a QSI character $\rho \in Irr(G)$ with $(\rho_{|N}, \chi) \neq 0$.\\
Then $\chi$ is QSI, too.
\end{lemma}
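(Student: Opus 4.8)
The plan is to combine Clifford's theorem with Mackey's restriction formula. First I would exploit the $G$-invariance of $\chi$: by Clifford's theorem, $\rho_{|N}$ is a multiple of a single $G$-conjugacy orbit of irreducible constituents. Since $\chi$ is $G$-invariant, its orbit is just $\{\chi\}$, and because $(\rho_{|N}, \chi) \neq 0$, this forces $\rho_{|N} = e\chi$ for some positive integer $e$. Since $\rho$ is QSI, there are $U \leq G$, $\varphi \in Irr(U)$ and $k \in \mathbb{N}$ with $k\rho = \varphi^G$ and $U/ker(\varphi)$ solvable. Restricting to $N$ then yields the identity $ke\chi = (\varphi^G)_{|N}$.

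Next I would apply Mackey's formula to the right-hand side. As $N$ is normal, it reads $(\varphi^G)_{|N} = \sum_{g} \big(({}^g\varphi)_{|N \cap {}^gU}\big)^N$, where $g$ runs over a set of $(N,U)$-double coset representatives, ${}^gU = gUg^{-1}$, and ${}^g\varphi$ is the corresponding conjugate character. Each summand is a genuine character of $N$, and their sum is the multiple $ke\chi$ of the \emph{single} irreducible character $\chi$. Since characters decompose with non-negative integer multiplicities, each summand must itself be a multiple of $\chi$; and as induction is additive, every irreducible constituent $\psi$ of each restriction $({}^g\varphi)_{|N \cap {}^gU}$ satisfies $\psi^N = \ell\chi$ for some $\ell \in \mathbb{N}$ (with $\ell > 0$, since $\psi^N(1) = [N : N \cap {}^gU]\,\psi(1) > 0$). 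Fixing any double coset representative $g$ and setting $V := N \cap {}^gU \leq N$ with such an irreducible constituent $\psi \in Irr(V)$ already gives the required induction relation $\ell\chi = \psi^N$.

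The remaining point — which I expect to be the main obstacle — is verifying that $V/ker(\psi)$ is solvable, so that $\chi$ genuinely satisfies the QSI condition from $V$ and $\psi$. The key observation is that $\psi$ is a constituent of the restriction of the irreducible character ${}^g\varphi$, so any element acting trivially in ${}^g\varphi$ acts trivially in $\psi$ as well; that is, $ker({}^g\varphi) \cap V \subseteq ker(\psi)$. Hence $V/ker(\psi)$ is a quotient of $V/(V \cap ker({}^g\varphi))$, which by the second isomorphism theorem embeds into ${}^gU/ker({}^g\varphi) \cong U/ker(\varphi)$. Since $U/ker(\varphi)$ is solvable, this subgroup is solvable, and therefore so is its quotient $V/ker(\psi)$. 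This establishes that $\chi$ is QSI from $V$ and $\psi$, completing the argument.
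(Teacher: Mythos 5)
Your proposal is correct and follows essentially the same route as the paper: Clifford's theorem to get $\rho_{|N}=e\chi$, Mackey's restriction formula to write $(\varphi^G)_{|N}$ as a sum of characters induced from $N\cap{}^gU$, and the observation that each summand must then be a multiple of the irreducible $\chi$. In fact you spell out two details the paper leaves implicit --- passing to an irreducible constituent $\psi$ of the restriction and verifying that $V/\ker(\psi)$ is solvable as a section of $U/\ker(\varphi)$ --- so your write-up is, if anything, slightly more complete.
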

\begin{proof}
Let $\rho \in Irr(G)$ with $(\rho_{|N}, \chi) \neq 0$, then by Clifford's theorem $\rho_{|N} = k \cdot \chi$ for some $k \in \mathbb{N}$.\\
Let $U \leq G$ and $\varphi \in Irr(U)$ such that $\rho$ is QSI from $U$ and $\varphi$, so $(\varphi_U^G)_{|N} = k \cdot m \cdot \chi$, for some $m \in \mathbb{N}$.\\
But that means for $x \in N$ and $g_1, ... g_n$ representatives of cosets of $N$:
\[km \cdot \chi(x) = \frac{1}{|U|} \sum_{i=1}^n{\sum_{\stackrel{y \in N}{x^{g_iy} \in \ U \cap N}}{\varphi(x^{g_iy})}} = \frac{|U \cap N|}{|U|} \sum_{i=1}^{n}{(\varphi_{|U \cap N}^N)^{(g_i^{-1})}(x)},\]
and as the sum is a multiple of $\chi$, the same must hold for any of its components, e.g. $(\varphi_{|U \cap N})^N$ is a multiple of $\chi$.\\
But that means that $\chi$ is QSI from $U \cap N$.
\end{proof}
\vspace{3mm}
\ \\
In our situation, with $S^m \leq G \leq Aut(S)^m:Sym(m)$, we can argue in two steps: Firstly if $\chi$ is an $Aut(S)$-invariant irreducible character of $S$, then $\chi^m$ is $G$-invariant in $N = S^m$. So if $G$ is QSI, then so is the character $\chi^m$, and therefore also $\chi$.\\
So now all we need to do is to find for each non-abelian simple group $S$ an $Aut(S)$-invariant irreducible character that cannot be QSI.\\
\\
In order to prove that a subgroup $U$, as required, does not exist, we need one more simple observation:
\begin{lemma}
Let $\chi \in Irr(G)$ be QSI from $U$ and $\varphi$; then:
\begin{enumerate}
\item $U$ intersects every class on which $\chi$ takes a non-zero value. More precisely, the fraction of the class of $g \in G$ which is contained in $U$ must be at least $\frac{|\chi(g)|}{\chi(1)}$.
\item If $U$ is non-abelian simple, then $\chi = 1_G$.
\end{enumerate}
\end{lemma}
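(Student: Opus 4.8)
The plan is to read off both assertions from the explicit formula for an induced character evaluated on a conjugacy class, combined with the relation between the degrees of $\varphi$ and $\chi$.

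For the first part, I would start from $k\chi(g) = \varphi^G(g)$ and expand the right-hand side via the induction formula already used above,
\[k\chi(g) = \frac{1}{|U|}\sum_{\substack{y\in G\\ g^y\in U}}\varphi(g^y).\]
Two elementary facts then drive everything. First, $|\varphi(h)|\le\varphi(1)$ for every $h$ (a character value is a sum of $\varphi(1)$ roots of unity), so the triangle inequality bounds $k\,|\chi(g)|$ by $\tfrac{\varphi(1)}{|U|}$ times the number of $y\in G$ with $g^y\in U$. Second, that number is exactly $|C_G(g)|\cdot|g^G\cap U|$, since for each fixed element of $g^G\cap U$ the conjugating elements form a single coset of $C_G(g)$. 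Combining these gives
\[k\,|\chi(g)|\le\frac{\varphi(1)}{|U|}\,|C_G(g)|\,|g^G\cap U|.\]

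It then remains to eliminate $\varphi(1)$ and $|C_G(g)|$. Evaluating the induction formula at $g=1$ yields the degree relation $\varphi(1)=\tfrac{|U|}{|G|}\,\varphi^G(1)=\tfrac{|U|}{|G|}\,k\chi(1)$, while $|C_G(g)|=|G|/|g^G|$. Substituting both and cancelling $k$ leaves
\[|\chi(g)|\le\chi(1)\cdot\frac{|g^G\cap U|}{|g^G|},\]
which is precisely the quantitative claim that the fraction of the class of $g$ lying in $U$ is at least $|\chi(g)|/\chi(1)$. The qualitative statement follows at once: if $\chi(g)\neq 0$ the right-hand fraction is positive, so $g^G\cap U\neq\emptyset$ (equivalently, an empty summation in the induction formula would force $\varphi^G(g)=0$).

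For the second part, the solvability clause in Definition \ref{qsi} does the work. Since $U$ is non-abelian simple, $ker(\varphi)$ is either $1$ or $U$; the choice $ker(\varphi)=1$ would make $U/ker(\varphi)\cong U$ non-solvable, contradicting the QSI hypothesis, so $ker(\varphi)=U$ and hence $\varphi=1_U$. Then $k\chi=1_U^G$, and Frobenius reciprocity gives $(k\chi,1_G)=(1_U^G,1_G)=(1_U,1_U)=1$. As $\chi$ is irreducible, $(\chi,1_G)\in\{0,1\}$, so the only way to satisfy $k\,(\chi,1_G)=1$ is $k=1$ and $(\chi,1_G)=1$, i.e. $\chi=1_G$. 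I expect the only mild bookkeeping to be the counting identity $\#\{y:g^y\in U\}=|C_G(g)|\,|g^G\cap U|$ together with the clean substitution of the degree relation; there is no serious obstacle, and part 2 is purely formal once simplicity forces $\varphi=1_U$.
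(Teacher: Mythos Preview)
Your proof is correct and follows the same approach as the paper. The paper's own argument is extremely terse---it simply says part~1 is ``clear by the definition of an induced character'' and for part~2 notes that $\varphi=1_U$ forces $1_G$ to appear in $\varphi^G=k\chi$, whence $\chi=1_G$---so your write-up is essentially a careful unpacking of exactly those two observations.
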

\begin{proof}
1) is clear by the definition of an induced character.\\
2) If $U$ is non-abelian simple, then $\varphi = 1_{U}$, hence $1_{G}$ is a component of $\varphi^G$, and so $\chi = 1_G$ (and more precisely $U=G$).
\end{proof}
\section{Proof for the case $A_n$} 
\begin{lemma}
Let $S = A_n$, $n \geq 5$, $n \neq 6$. $S^m \leq G \leq Aut(S^m)$. Then $G$ is not QSI.
\end{lemma}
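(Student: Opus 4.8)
The plan is to exhibit a single $Aut(A_n)$-invariant irreducible character of $S=A_n$ that cannot be QSI; by the reduction lemmas of the previous section this already forces $G$ not to be QSI. Since $n\neq 6$ we have $Aut(A_n)=S_n$, so an $Aut(A_n)$-invariant irreducible character of $A_n$ is precisely one that extends to $S_n$. I would take $\chi$ to be the standard character, $\chi(g)=(\text{number of fixed points of }g)-1$, of degree $n-1$: it is the restriction of an $S_n$-character, hence $Aut(A_n)$-invariant, and it stays irreducible on $A_n$ because the partition $(n-1,1)$ is not self-conjugate. Assume for contradiction that $\chi$ is QSI, say $k\chi=\varphi^{A_n}$ with $\varphi\in Irr(U)$ and $U/ker(\varphi)$ solvable.

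The engine is part 1 of the preceding Lemma applied to a $3$-cycle $g$: since $\chi(g)=n-4$, the subgroup $U$ must contain at least a fraction $\tfrac{n-4}{n-1}$ of all $2\binom{n}{3}$ three-cycles of $A_n$; in particular it contains a $3$-cycle, and almost all of them when $n$ is large. I would then run a trichotomy on the permutation action of $U$ on $\{1,\dots,n\}$. If $U$ is primitive, Jordan's theorem (a primitive group containing a $3$-cycle contains the full alternating group) gives $U=A_n$, and part 2 of the Lemma then forces $\chi=1_{A_n}$, a contradiction. If $U$ is transitive but imprimitive with blocks of size $d\le n/2$, every $3$-cycle of $U$ lies inside one block, so $U$ has at most $\tfrac{n}{d}\cdot 2\binom{d}{3}$ three-cycles; comparison with the density bound reduces to $(d-1)(d-2)\ge (n-4)(n-2)$, which is impossible since $(d-1)(d-2)\le\tfrac14(n-2)(n-4)$.

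It remains to treat the intransitive case. With orbit sizes $n_1\ge\cdots\ge n_r$ the available $3$-cycles number $2\sum_i\binom{n_i}{3}\le 2\big(\binom{n_1}{3}+\binom{n-n_1}{3}\big)$, and for $n_1\le n-2$ this is at most $2\binom{n-2}{3}<\tfrac{n-4}{n-1}\cdot 2\binom{n}{3}$; hence the density bound forces $n_1=n-1$, so $U$ fixes a single point and is transitive on the remaining $m=n-1$ points, with $U\le A_{n-1}$. I would repeat the dichotomy inside $A_{n-1}$: the same block-counting estimate rules out imprimitivity on these $n-1$ points, so $U$ is primitive there, Jordan gives $U=A_{n-1}$, and since $A_{n-1}$ is non-abelian simple for $n\ge 6$, part 2 of the Lemma again yields $\chi=1_{A_n}$, a contradiction. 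The only residue is $n=5$, where $A_{n-1}=A_4$ is solvable and the simplicity argument is unavailable; there I would close by degrees, noting that the transitive subgroups of $A_4$ are $A_4$ and a Klein four group $V_4$, of index $5$ and $15$ in $A_5$, so $\varphi^{A_5}(1)\in\{5\varphi(1),15\varphi(1)\}$ can never equal the multiple $k(n-1)=4k$ of $\chi(1)$ (and $V_4$ fails the $3$-cycle density outright, containing no $3$-cycle).

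The main obstacle is making the counting in the imprimitive and intransitive cases genuinely uniform in $n$, i.e. verifying the two elementary but slightly delicate inequalities above for all admissible block sizes and orbit partitions, together with the correct invocation of Jordan's theorem and the isolation of the small value $n=5$; the globally excluded $n=6$, where $Aut(A_6)\neq S_6$, is handled separately, for instance via $A_6\cong PSL_2(9)$.
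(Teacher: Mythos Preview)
Your argument is correct and takes a genuinely different route from the paper. For $n\geq 7$ the paper restricts $k\chi_n=\varphi^{A_n}$ to the two-point stabilizer $A_{n-2}$ and uses Mackey's formula: some double-coset summand $(\varphi^{(t)}|_{U^t\cap A_{n-2}})^{A_{n-2}}$ has the form $k_1\chi_{n-2}+k_2\cdot 1$ with $k_2>k_1$, hence is strictly positive on all of $A_{n-2}$, which forces $U^t\cap A_{n-2}$ to meet every conjugacy class of $A_{n-2}$ and therefore $U\supseteq A_{n-2}$; then $U$ cannot be simple, so $U\cong S_{n-2}$, which is excluded by an $n$-cycle or a $(n-3,3)$-element. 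You replace this Mackey/restriction step by the quantitative $3$-cycle density bound together with Jordan's theorem, pinning down $U$ via the orbit/block structure of its natural action rather than via character restrictions. The paper's method is more character-theoretic and produces the sharp intermediate containment $U\supseteq A_{n-2}$ in one stroke; your method is more elementary in that it avoids Mackey altogether and appeals only to classical permutation-group facts, at the price of the combinatorial estimates you flag. One small caveat on those estimates: your intermediate inequality $\binom{n_1}{3}+\binom{n-n_1}{3}\le\binom{n-2}{3}$ is false for $n_1=1$ (then $n-n_1=n-1>n-2$), but that case is vacuous since $n_1=1$ forces $U=1$; the honest bound is the direct one, $\sum_i\binom{n_i}{3}\le\binom{n-2}{3}$ whenever all parts satisfy $n_i\le n-2$, which follows from convexity of $\binom{\cdot}{3}$.
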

\begin{proof}

Assume that $G$ is QSI.\\
To obtain a contradiction we only need to find an $Aut(S)$-invariant irreducible character of $S$ which is not QSI.
As we have excluded the case $n=6$ (we will treat $A_6 \cong PSL_2(9)$ later as a group of Lie type), it always holds that $Aut(S)= S_n$, and therefore the character $\chi_n := \pi_n -1$ of degree $n-1$, with $\pi_n$ the permutation character of the natural action, is $Aut(S)$-invariant.\\
\\
For the case $n=5$, it's easy to see that a subgroup with the required properties does not exist (e.g. we would need elements of both orders 5 and 3).\\
In all other cases the 2-point stabilizer $A_{n-2}$ is still doubly transitive. Now assume the character $\chi_{n} \in Irr(A_{n})$ were QSI, i.e. $k \cdot \chi_{n} = \varphi^{A_{n}}$ for some $\varphi \in Irr(U)$, $U < A_{n}$, and $U/ker(\varphi)$ solvable.\\
But $(\chi_{n})_{|A_{n-2}} = \chi_{n-2} + 2\cdot 1_{A_{n-2}}$, and so by Mackey's formula
\[k \cdot \chi_{n-2} + 2k \cdot 1_{A_{n-2}} = (\varphi^{A_{n}})_{|A_{n-2}} = \sum_{t \in T}{(\varphi^{(t)}_{|A_{n-2} \cap U^t})^{A_{n-2}}}\]
where the $t$ are representatives of double cosets of $A_{n-2}$ and $U$.\\
Therefore also $(\varphi_{|A_{n-2} \cap U})^{A_{n-2}} = k_1 \cdot \chi_{n-2} + k_2 \cdot 1_{A_{n-2}}$, with $k_1, k_2 \in \mathbb{N}_0$, and by replacing $U$ with some $U^t$ we can assume w.l.o.g that $k_2 > k_1$, i.e. this character sum takes only positive values. But that means that the inducing subgroup $A_{n-2} \cap U$ has to intersect all conjugacy classes of $A_{n-2}$, which is only possible for $U \geq A_{n-2}$. We already know that $U$ cannot be non-abelian simple, therefore $U \cong S_{n-2}$. But this is impossible too, e.g. the conjugacy classes of $(1...n)$ or $(1...n-3)(n-2, n-1, n)$ (depending on which of those permutations is even) would have to intersect $U$.
\end{proof}
\vspace{10mm}
\section{Proof for the sporadic cases}
In all cases the following procedure already succeeds:\\ 
Find an $Aut(S)$-invariant irreducible character $\chi \neq 1$ with non-zero values on elements of certain orders. If possible these element orders are chosen so that there is no maximal subgroup of $S$ with order divisible by all of them, which already contradicts QSI-property.\\
If maximal subgroups with the required order exist but are always non-abelian simple we can descend further and hope to get a contradiction in the next step.\\
The character degrees, element orders, and, if they exist, the maximal subgroups $M$ containing elements of all these orders, are given in table \ref{T1}.\\
\begin{table}
\caption{Sporadic simple groups}
\label{T1}
\renewcommand{\arraystretch}{1.2}
\begin{minipage}{7cm}
		\begin{tabular}[h]{|cccc|}
		\hline
			{S} & {$\chi(1)$} & {element orders} & {$M$}\\
			\hline
			$M_{11}$ & 45 & 8, 11 & -\\
			$M_{12}$ & 54 & 10, 11 & -\\
			$M_{22}$ & 21 & 8, 11 & -\\
			$M_{23}$ & 22 & 7, 23 & -\\
			$M_{24}$ & 3520 & 21, 23 & -\\
			$J_1$ & 77 & 5, 19 & -\\
			$J_2$ & 36 & 5, 7 & -\\
			$J_3$ & 324 & 17, 19 & -\\
			$J_4$ & 889111 & 35, 37 & -\\
			$HS$ & 3200 & 7, 11 & $M_{22}$\\
			$McL$ & 4500 & 7, 11 & $M_{22}$\\
			$He$ & 1920 & 7, 17 & -\\
			$Ru$ & 102400 & 13, 29 & -\\
			$Suz$ & 248832 & 11, 13 & -\\
\hline
		\end{tabular}
\end{minipage}
\begin{minipage}{7cm}
		\begin{tabular}[h]{|cccc|}
		\hline
			{S} & {$\chi(1)$} & {element orders} & {$M$}\\
			\hline
			$ON$ & 26752 & 7, 31 & -\\
			$Co_{3}$ & 275 & 7, 23 & $M_{23}$\\
			$Co_{2}$ & 275 & 7, 23 & $M_{23}$\\
			$Co_{1}$ & 21049875 & 13, 23 & -\\
			$Fi_{22}$ & 1360800 & 11, 13 & -\\
			$Fi_{23}$ & 30888 & 17, 23 & -\\
			$Fi_{24}'$ & 57477 & 29, 33 & -\\
			$HN$ & 2985984 & 11, 19 & -\\
			$Ly$ & 1534500 & 37, 67 & -\\
			$Th$ & 30875 & 27, 31 & -\\
			$B$ & 96255 & 47, 48 & -\\
			$M$ & 18538750076 & 59, 119 & -\\
			$Ti$ & 1728 & 5, 13 & $PSL(2,25)$\\
			\ & \ & \ & \ \\
\hline
		\end{tabular}
	  \\
\end{minipage}
		\end{table}
In each case the characters, and if possible the maximal overgroups of the chosen elements, were taken from \cite{ATLAS} and checked with the computer programs GAP and/or MAGMA. \\
In addition, in several cases (but not in all, as our choice of elements shows), table 4 from \cite{GK} was used to obtain the maximal subgroups containing one of the specified elements. Often those subgroups are normalizers or other small subgroups, so that the number of conjugacy classes not contained there is so large that the second class can almost be chosen arbitrarily.\\	
Note also that we include the Tits group $Ti := {^2}F_4(2)'$ here, and not in the Lie type section, as $Ti$ has no unique (Steinberg) character of degree $|Ti|_2$ (and, strictly speaking, is not a Lie type group). 
\vspace{10mm}
\section{Proof for the classical groups}
In this section let $S = S(n,p^r)$ be a non-abelian simple classical group over a field of characteristic $p$. For these groups, and more generally for all simple groups of Lie type, there is a unique irreducible character, called the Steinberg character $St$, of degree $|S|_p$, where the latter is the highest power of $p$ dividing $|S|$. Moreover, $St(g) \neq 0$ whenever $g$ is a $p'$-element \footnote{For the properties of the Steinberg character, cf. \cite{Fe}}.
\\
\\
We will prove that (with few exceptions) $St$ cannot be QSI by looking at the maximal overgroups of certain $p'$-elements, and concluding that elements of some other order coprime to $p$ cannot be contained there.\\
\\
To this aim, recall the following results about some special elements of finite classical groups:
\begin{lemma}
In $GL_n(q)$ there exists a cyclic subgroup $T$ of order $q^n-1$ acting transitively on the non-zero vectors of $V$ - and in particular irreducibly on $V = (\mathbb{F}_q)^n$ . Every irreducible cyclic subgroup of $GL_n(q)$ is contained in such a group $T$.\\
$T$ is called a Singer cycle.\\
\\
In general, if $G \leq GL_n(q)$ is a classical group over $V$, and $T \cap G$ acts irrreducibly on $V$, then $T \cap G$ is called a Singer cycle of $G$.
\\
Singer cycles exist in $SL_n(q)$ (and have order $\frac{q^n-1}{q-1}$), $Sp_{2n}(q)$ (order $q^n+1$), $O_{2n}^-(q)$ (order $q^n+1$) and for $n$ odd also in $GU_n(q)$ (order $q^n+1$ as well).In contrast, the groups $GU_n(q)$ for $n$ even, $O_{2n}^+(q)$ and $O_{2n+1}(q)$ do not contain Singer cycles\footnote{cf. \cite{H3}}.
\end{lemma}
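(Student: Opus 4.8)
The plan is to handle the $GL_n(q)$ statement first by an explicit field-theoretic construction, then to deduce the existence and orders of the Singer cycles in the remaining classical groups, and finally to explain why the three excluded families cannot contain one. The whole argument rests on identifying the natural module with a field extension and reading off linear-algebraic data from the field structure.

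For $GL_n(q)$ I would identify the natural module $V$ with the field $\mathbb{F}_{q^n}$, regarded as an $n$-dimensional $\mathbb{F}_q$-vector space. Multiplication by a generator $\zeta$ of the cyclic group $\mathbb{F}_{q^n}^\times$ is $\mathbb{F}_q$-linear and invertible, so after fixing an $\mathbb{F}_q$-basis it defines an element of $GL_n(q)$; the group $T := \langle \zeta \rangle \cong \mathbb{F}_{q^n}^\times$ then has order $q^n-1 = |V \setminus \{0\}|$ and acts simply transitively on $V \setminus \{0\}$, hence in particular irreducibly. For the converse, if $C = \langle g \rangle$ acts irreducibly on $V$, then $A := \mathbb{F}_q[g]$ is a commutative $\mathbb{F}_q$-algebra for which $V$ is a faithful simple module; by Schur's lemma $A$ is a finite field, and faithful irreducibility forces $A \cong \mathbb{F}_{q^n}$ with $V$ one-dimensional over $A$, so $g \in A^\times = T$.

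Next I would realize the Singer cycles of $SL_n(q)$, $Sp_{2n}(q)$, $O_{2n}^-(q)$ and $GU_n(q)$ ($n$ odd) inside suitable field models. For $SL_n(q)$ the determinant of $\zeta \in T$ equals its norm $N_{\mathbb{F}_{q^n}/\mathbb{F}_q}(\zeta)$; since the norm map is surjective onto $\mathbb{F}_q^\times$, the kernel $T \cap SL_n(q)$ is cyclic of order $\frac{q^n-1}{q-1}$ and still acts irreducibly. For the form-preserving cases I would equip $V = \mathbb{F}_{q^{2n}}$ with a nondegenerate form built from the trace, e.g. $B(x,y) = \mathrm{Tr}_{\mathbb{F}_{q^{2n}}/\mathbb{F}_q}(x\, y^{q^n})$, and check that the norm-one torus $\{\,\zeta : \zeta^{q^n+1} = 1\,\}$, cyclic of order $q^n+1$, consists of isometries acting irreducibly; a Witt-index computation identifies the orthogonal form as having minus type, which accounts for $O_{2n}^-(q)$ and, forgetting the quadratic refinement, for $Sp_{2n}(q)$. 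The analogous construction over $\mathbb{F}_{q^2}/\mathbb{F}_q$, using a Hermitian form $\mathrm{Tr}_{\mathbb{F}_{q^{2n}}/\mathbb{F}_{q^2}}(x\, y^{q^n})$, yields the unitary Singer cycle of order $q^n+1$ when $n$ is odd, the parity condition being exactly what guarantees that the order-$(q^n+1)$ element generates $\mathbb{F}_{q^{2n}}$ over $\mathbb{F}_{q^2}$ and hence acts irreducibly.

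Finally, the non-existence claims for $GU_n(q)$ with $n$ even, for $O_{2n}^+(q)$, and for $O_{2n+1}(q)$ follow by contradiction: by the first part any irreducible cyclic $C \leq GL_N(q)$ sits inside a field $\mathbb{F}_{q^N}^\times$, and the compatibility of the preserved form with this field structure then pins down both the parity of $N$ and the isometry type. The odd-dimensional orthogonal groups are excluded immediately, since an irreducible torus forces the degree to be even; for the $+$ type one checks that a full irreducible field action is compatible only with a form of $-$ type, and the even-$n$ unitary case fails for the same parity reason that made the odd case work. I expect this last step -- turning the informal statement ``the form is compatible with the field'' into a clean, uniform obstruction for the three excluded families -- to be the main technical difficulty, which is presumably why the lemma is quoted from the literature rather than reproved in full here.
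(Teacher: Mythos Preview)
The paper does not prove this lemma at all; it is stated as background and the footnote refers the reader to Huppert's paper on Singer cycles in classical groups. Your outline is precisely the standard field-theoretic argument found there (identify $V$ with $\mathbb{F}_{q^n}$, realize the isometry groups via trace forms and norm-one tori, and exclude the remaining cases by a Galois/parity obstruction), so as a proof sketch it is correct and matches the cited source in approach.

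Two small imprecisions are worth noting. First, the form $B(x,y)=\mathrm{Tr}_{\mathbb{F}_{q^{2n}}/\mathbb{F}_q}(x\,y^{q^n})$ is symmetric, not alternating, so ``forgetting the quadratic refinement'' does not literally give the symplectic case in odd characteristic; one instead uses something like $\mathrm{Tr}(\alpha\,x\,y^{q^n})$ with $\alpha^{q^n}=-\alpha$, or passes through the embedding $GU_n(q)\le Sp_{2n}(q)$. Second, your exclusion of $O_{2n+1}(q)$ is phrased slightly loosely: irreducible cyclic subgroups of $GL_N(q)$ exist for all $N$, so the obstruction is not that the torus forces even degree, but that a nondegenerate $\mathbb{F}_q$-bilinear form invariant under such a torus induces an order-$2$ Galois automorphism of $\mathbb{F}_{q^N}$, which requires $N$ even. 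These are refinements rather than gaps, and you already anticipated that this last step is where the work lies.
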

\ \\
Moreover the following well-known result (see \cite{Z1}) will be of vital importance for our subsequent considerations:
\begin{satz}[\bf{Zsigmondy}]
Let $d, n \in \mathbb{N}$, and both $\geq 2$. Then $d^n - 1$ has a {\itshape primitive prime divisor} \ (i.e. a prime divisor dividing none of the terms $d^k - 1$, $\mathbb{N} \ni k < n$), except for the following cases:
\begin{itemize}
	\item $d = 2, n = 6$
	\item $d$ a Mersenne prime , $n = 2$.
\end{itemize}
\end{satz}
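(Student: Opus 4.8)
The natural tool here is the factorization into cyclotomic polynomials. Writing $\Phi_k$ for the $k$-th cyclotomic polynomial and letting $\phi$ denote Euler's totient, I would start from the identity $d^n-1=\prod_{k\mid n}\Phi_k(d)$ together with the observation that a prime $p$ is a primitive prime divisor of $d^n-1$ exactly when the multiplicative order of $d$ modulo $p$ equals $n$. Since $\Phi_n(d)\mid d^n-1$, the plan is to locate the primitive prime divisor inside the single factor $\Phi_n(d)$, so that the whole question reduces to controlling the prime factorization of $\Phi_n(d)$.

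The conceptual core is a lemma on the prime divisors of $\Phi_n(d)$: if $p\mid\Phi_n(d)$ and $e:=\mathrm{ord}_p(d)$, then $e\mid n$, and either $e=n$ — in which case $p$ is a primitive prime divisor and we are done — or $p\mid n$. In the second case $e\mid p-1$ forces $e<p$, while a standard argument shows $n/e$ is a power of $p$; hence every prime factor of $n$ distinct from $p$ already divides $e$ and is therefore smaller than $p$, so that $p$ is the largest prime divisor of $n$. A lifting-the-exponent computation moreover gives $p^2\nmid\Phi_n(d)$ for odd $p$. Consequently, if $d^n-1$ has \emph{no} primitive prime divisor, then every prime factor of $\Phi_n(d)$ equals this largest prime $p$ and occurs only to the first power, i.e. $\Phi_n(d)=p$.

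It then remains to contradict $\Phi_n(d)=p$ by a size estimate. Evaluating $\Phi_n(d)=\prod_{\zeta}(d-\zeta)$ over the primitive $n$-th roots of unity and using $|d-\zeta|\ge d-1$ (strictly for at least one factor once $n\ge2$) yields $\Phi_n(d)>(d-1)^{\phi(n)}\ge (d-1)^{\,p-1}$, where the last step uses $\phi(n)\ge p-1$, valid because $p\mid n$. For $d\ge3$ this already gives $\Phi_n(d)>2^{\,p-1}>p$ whenever $p\ge3$, the desired contradiction; the remaining case $p=2$ (where $n$ is a power of $2$) I would dispose of directly from $\Phi_{2^e}(d)=d^{2^{e-1}}+1$, which for $e\ge2$ can never be a power of $2$, leaving only $n=2$ with $d+1$ a power of two — the Mersenne exception.

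The main obstacle is the borderline base $d=2$, where $(d-1)^{\phi(n)}=1$ and the crude estimate collapses entirely. Here one needs a genuinely sharper lower bound for $\Phi_n(2)$, together with a direct verification of the finitely many small $n$ that the bound does not reach; this is precisely where $n=6$, with $\Phi_6(2)=3$ equal to the largest prime factor of $6$, survives as the second genuine exception. Establishing that these — and only these — pairs $(d,n)$ escape the estimate, i.e. matching the analytic bound exactly against the list of exceptions, is the delicate and computational heart of the proof.
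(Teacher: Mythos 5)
The paper offers no proof of this statement at all: it is quoted as a classical result with a citation of Zsigmondy's original paper \cite{Z1} and then used as a black box throughout Sections 5 and 6, so your proposal has to stand on its own rather than be compared with an internal argument. As far as it goes, it is the standard cyclotomic-polynomial proof, and the steps you actually carry out are correct: a prime $p \mid \Phi_n(d)$ whose order $e = \mathrm{ord}_p(d)$ is smaller than $n$ must divide $n$, must be the \emph{largest} prime divisor of $n$ (since $n/e$ is a $p$-power and $e \mid p-1$), and for odd $p$ divides $\Phi_n(d)$ exactly once; hence the absence of a primitive prime divisor forces $\Phi_n(d) = p$, which the estimate $\Phi_n(d) > (d-1)^{\phi(n)} \ge (d-1)^{p-1}$ refutes for $d \ge 3$, $p \ge 3$, while your reduction of $p = 2$ to $\Phi_{2^e}(d) = d^{2^{e-1}}+1$ correctly isolates $n = 2$ with $d+1$ a power of two. (Incidentally, that is the accurate form of the first exception, slightly more general than the paper's phrase ``$d$ a Mersenne prime'': $d = 15$, $n = 2$ is also exceptional.)

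The genuine gap is the base $d = 2$, which you name but do not close. This is not a peripheral loose end: it contains the second exception $n = 6$, and it is precisely the case this paper relies on most --- the Zsigmondy exceptions it must handle by hand ($PSL_7(2)$, $PSp_8(2)$, $PSU_5(2)$, $P\Omega_8^-(2)$) all stem from $d = 2$, $n = 6$, and every application to a group over $\mathbb{F}_2$ needs the theorem for $d = 2$. What is missing is a concrete substitute for the collapsed bound, for instance: from $\Phi_n(2) = \prod_{k \mid n} \bigl(2^{n/k}-1\bigr)^{\mu(k)}$ and $2^{m-1} \le 2^m - 1 < 2^m$ one gets
\begin{equation*}
\Phi_n(2) \ \ge \ 2^{\,\phi(n) - 2^{\omega(n)-1}},
\end{equation*}
where $\omega(n)$ is the number of distinct prime divisors of $n$; writing $n = e p^a$ with $e = \mathrm{ord}_p(2) \mid p-1$, the requirement $\Phi_n(2) = p$ then fails for all but a short explicit list of $n$ (already for $p=3$, i.e.\ $n = 2\cdot 3^a$, the bound kills every $a \ge 2$), and inspection of that list leaves only $n = 6$. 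Until an estimate of this kind, together with the finite verification it entails, is actually written down, you have proved Zsigmondy's theorem only for $d \ge 3$, and the statement in the form the paper uses it remains open.
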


\vspace{4mm}
 \ \\
As an immediate consequence we get
\begin{lemma}
\label{zslem}
Let $d, n \in \mathbb{N}$, both $\geq 2$, and $p_n$ a primitive prime divisor of $d^n-1$.\\
Then $p_n \equiv 1 (\text{mod n})$.\\
If at the same time $p_n$ divides $d^m-1$, then: \ $n | m$.
\begin{proof}
The first part follows from the fact that $d$ obviously has multiplicative order $n$ modulo $p_n$, and thus $\varphi(p_n) = p_n-1$ is divisible by $n$.\\
With $d^n-1$ and $d^m-1$, $p_n$ also divides $d^{m-n}-1$. By induction, this yields the second part of the assertion.
\end{proof}
\end{lemma}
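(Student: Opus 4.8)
The plan is to reduce both assertions to a single statement about the multiplicative order of $d$ modulo $p_n$, working inside the cyclic group $\mathbb{F}_{p_n}^\times$ of order $p_n - 1$. First I would note that $p_n \nmid d$, for otherwise $p_n \mid d^n$ and hence $p_n \nmid d^n - 1$, contradicting that $p_n$ is a divisor of $d^n - 1$. Thus $d$ is a unit modulo $p_n$ and its multiplicative order $e := \mathrm{ord}_{p_n}(d)$ is well defined.

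The crucial step is to show that $e = n$ exactly. The congruence $d^n \equiv 1 \pmod{p_n}$ gives $e \mid n$. Conversely, if $e$ were a proper divisor of $n$, then in particular $1 \le e < n$ and $p_n \mid d^e - 1$, contradicting the defining property of a primitive prime divisor, namely that $p_n$ divides none of the $d^k - 1$ with $k < n$. Hence $e = n$.

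Both parts of the lemma now follow at once. For the first, Lagrange's theorem in $\mathbb{F}_{p_n}^\times$ yields $e \mid p_n - 1$, that is $n \mid p_n - 1$, which is exactly the congruence $p_n \equiv 1 \pmod n$. For the second, the hypothesis $p_n \mid d^m - 1$ says $d^m \equiv 1 \pmod{p_n}$, so the order $e = n$ must divide $m$, giving $n \mid m$.

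I do not expect any genuine obstacle: once $e = n$ is in hand, everything is immediate. The only point needing a moment's attention is this equality $e = n$ rather than merely $e \mid n$, and it is precisely here that primitivity is used; without it one could only conclude $e \mid n$, and then neither $n \mid p_n - 1$ nor $n \mid m$ would be forced. Note that the argument uses only the definition of a primitive prime divisor, so Zsigmondy's theorem itself is needed only to guarantee that such a $p_n$ exists.
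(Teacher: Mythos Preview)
Your proof is correct and follows essentially the same approach as the paper: both hinge on establishing that the multiplicative order of $d$ modulo $p_n$ is exactly $n$, from which the first assertion follows by Lagrange's theorem. For the second assertion the paper phrases the conclusion as an induction on $m$ via $p_n \mid d^{m-n}-1$, but this is just the standard order-divides-exponent fact you invoke directly, so the arguments coincide.
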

\vspace{3mm}
 \ \\
Note that an element of $GL_n(q)$ with order divisible by a primitive prime divisor of $q^n-1$ is necessarily irreducible (by Maschke's theorem).\\
\\
Now let $S$ be a finite simple classical group, and $\widehat{S}$ the natural pre-image of $S$ in $GL(V)$, i.e. $\widehat{S} = SL(V)$, $Sp(V)$, $SU(V)$ or $\Omega^{(+/-)}(V)$ respectively. Furthermore let $\widehat{x}$ be an element of a maximal torus $\widehat{T}$ of $\widehat{S}$, as given in table \ref{T3}.\\
Note that $S = S_{sc}/Z(S_{sc})$, where $S_{sc}$ is the simply-connected version of the respective group of Lie type.\\
The orders of all maximal tori of $S_{sc}$ are well-known, see e.g. Carter(\cite{Ca2}, Propositions 7-10). For our specific selection cf. \cite{MSW}, table 1.1.\\
\\
\begin{table}[h]
\caption{Choice of elements for the classical groups}
\label{T3}
\renewcommand{\arraystretch}{1.3}
		\begin{tabular}[h]{|l|c|c|}
		\hline
			{S} & {$ord(\widehat{x})$} & {$|\widehat{T}|$}\\
			\hline
			$PSL_n(q)$ & $\frac{q^n-1}{q-1}$ & $ord(\widehat{x})$\\
			$PSp_{2n}(q)$ & $q^n+1$ & $ord(\widehat{x})$\\
			$PSU_n(q)$, $n$ odd & $\frac{q^n+1}{q+1}$ & $ord(\widehat{x})$\\
			$PSU_n(q)$, $n$ even & $\frac{q^{n-1}+1}{q+1}$ & $ord(\widehat{x}) \cdot (q+1)$\\
			$P\Omega_{2n+1}(q)$ & $\frac{1}{2} \cdot (q^n+1)$ & $ord(\widehat{x})$\\
			$P\Omega_{2n}^-(q)$ & $\frac{1}{(2,q-1)} \cdot (q^n+1)$ & $ord(\widehat{x})$\\
			$P\Omega_{2n}^+(q)$ & $\frac{1}{(2,q-1)} \cdot (q^{n-1}+1)$ & $ord(\widehat{x}) \cdot (q+1)$\\
			\hline
		\end{tabular}
		\end{table}
		\\
\vspace{6mm}	
\\	
In the cases $PSL_n(q)$, $PSU_n(q)$ with $n$ odd, $PSp_{2n}(q)$ and $P\Omega_{2n}^-(q)$, $\widehat{x}$ is a  Singer element of $\widehat{S}$.\\
In the remaining cases we use Singer elements of certain subgroups.\\
For the case $S = PSU_n(q)$ we make use of the embedding $SU_{n-1}(q) \bot SU_{1}(q) \leq SU_n(q)$.\\
For $S = P\Omega_{2n+1}(q)$, or $P\Omega_{2n+2}^+(q)$ respectively, we choose a Singer element of $\Omega_{2n}^-(q)$, using for this purpose  the embeddings  $\Omega_{2n-2}^-(q) \bot \Omega_{2}^-(q) \leq \Omega_{2n}^+(q)$, and for $q$ odd:\\ $\Omega_{2n}^-(q) \bot \Omega_1(q) \leq \Omega_{2n+1}(q)$.
\\ \\
We now consider the maximal overgroups of the elements $x$.\\
According to Aschbacher's classification (\cite{As1}), a maximal subgroup of a simple classical group belongs either to one of eight classes $\mathcal{C}_1$, ..., $\mathcal{C}_8$ (whose representatives can in each case be viewed as stabilizers of certain structures), or to a set $\mathcal{S}$ of exceptional cases\footnote{for the detailed notation of these Aschbacher classes cf. \cite{As1} or \cite{KL}, chapter 4}.\\
The respective maximal overgroups of $x$ are given in \cite{MSW}, Theorem 1.1 (and partly in \cite{Be1}).
In groups with Singer elements those are mostly subgroups of Aschbacher class $\mathcal{C}_3$  ("field extension type"), in the other groups mainly subgroups of class $\mathcal{C}_1$ (reducible subgroups).
\\
\\
The following theorem summarizes the results of \cite{Be1} (for $PSL_n(q)$) and \cite{MSW}, Theorem 1.1 (for the remaining classical groups):\\
\begin{satz}
\label{msw}
Let $S$ be a non-abelian simple classical group, and $x \in S$ as chosen in table \ref{T3}. Let $M < S$ be a maximal subgroup with $x \in M$. Then one of the following holds:
\begin{enumerate}
\item $M \in \mathcal{C}_1 \cup ... \cup \mathcal{C}_8$, as defined in \cite{As1}. More precisely one gets:\\
\renewcommand{\arraystretch}{1.3}
		\begin{tabular}{|l|c|c|}
		\hline
			{S} & {$M \in ...$} & {possible types of $M$}\\
			\hline
			$PSL_n(q)$ & $C_3$ & $GL_{n/r}(q^r)$ \ ,with $1 \neq r|n$\\
			$PSp_{2n}(q)$ & $C_3 \cup C_8$ & $Sp_{2n/r}(q^r)$, $GU_n(q)$ or $O_{2n}^{-}(q)$\\
			$PSU_n(q)$, $n$ odd & $C_3$ & $GU_{n/r}(q^r)$\\
			$PSU_n(q)$, $n$ even & $C_1$ & $GU_1(q) \bot GU_{n-1}(q)$\\
			$P\Omega_{2n+1}(q)$ & $C_1$ & $O_1(q) \bot O_{2n}^-(q)$\\
			$P\Omega_{2n}^-(q)$ & $C_3$ & $O_{2n/r}^-(q^r)$ or $GU_{n}(q)$\\
			$P\Omega_{2n}^+(q)$ & $C_1 \cup C_3$ & $GU_{n}(q)$, $O_n(q^2)$, $O_1(q) \bot O_{2n-1}(q)$\\
			\ & \ & or $O_2^-(q) \bot O_{2n-2}^-(q)$\\
			\hline
		\end{tabular}
		\\
		\\
		\item $S = PSL_n(q)$, with $(n,q) \in \left\{(2,5), (2,7), (2,9), (3,4)\right\}$.
		\item $S = PSU_4(2)$.
		\item $(M,S) \cong (PSL_2(7), PSU_3(3)), (A_7, PSU_3(5))$, $(A_7, PSU_4(3))$,\\  $(PSL_3(4), PSU_4(3))$, $(PSL_2(11), PSU_5(2))$, $(M_{22}, PSU_6(2))$, \\ $(S_9, P\Omega_7(3)), (PSL_2(17), PSp_8(2))$, $(2^4.A_5, PSp_4(3)),$ or $(A_9, P\Omega_8^+(q))$.
		\item $S = P\Omega_8^+(q)$, and either $soc(M) \cong P\Omega_7(q)$ with $q$ odd, or $soc(M) \cong PSp_6(q)$ with $q$ even.
\end{enumerate}
\end{satz}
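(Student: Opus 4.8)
The plan is to reduce the statement to the two classification results that are already available in the literature, namely \cite{Be1} for $PSL_n(q)$ and \cite{MSW}, Theorem 1.1 for the remaining families, and then to check that our specific choice of $x$ from Table \ref{T3} falls exactly under their hypotheses. The common thread running through every row of the table is that $x$ is a \emph{ppd-element}: its order is divisible by a primitive prime divisor of $q^e-1$ for the relevant exponent $e$ (namely $e=n$ for $PSL_n$, $e=2n$ for $PSp_{2n}$ and $P\Omega_{2n}^-$, and the analogous exponents $2(n-1)$, $2n$ coming from the Singer cycles of the subgroups used in the even-unitary, odd-orthogonal and plus-type orthogonal cases). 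By Zsigmondy's theorem such a primitive prime divisor exists outside a short explicit list of small $(d,n)$, and by Lemma \ref{zslem} a prime dividing $q^e-1$ primitively forces any subgroup containing $x$ to respect the module structure that $x$ imposes on $V$.

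First I would, for each row of Table \ref{T3}, record the action of $\widehat{x}$ on the natural module, using the remark after Lemma \ref{zslem} that an element whose order is divisible by a primitive prime divisor of $q^n-1$ is necessarily irreducible by Maschke's theorem. In the four Singer-cycle rows ($PSL_n$, $PSU_n$ with $n$ odd, $PSp_{2n}$, $P\Omega_{2n}^-$) this makes $\widehat{x}$ irreducible on all of $V$, which is precisely why the generic overgroup there cannot be a $\mathcal{C}_1$ stabilizer of a subspace and instead turns out to be of field-extension type $\mathcal{C}_3$ (together with the $\mathcal{C}_8$ embeddings $GU_n$, $O_{2n}^-$ in the symplectic case). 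In the three non-Singer rows ($PSU_n$ with $n$ even, $P\Omega_{2n+1}$, $P\Omega_{2n}^+$) the irreducibility holds only on the proper nondegenerate subspace carrying the Singer cycle of the relevant subgroup, which is exactly why the generic overgroups there are the reducible $\mathcal{C}_1$ stabilizers tabulated in case~1. Thus the dichotomy in the table is predicted by the irreducibility behaviour of $\widehat{x}$ alone.

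Next I would invoke the cited classifications directly. Theorem 1.1 of \cite{MSW} lists all maximal subgroups of the classical groups containing an element of the prescribed ppd type, and \cite{Be1} does the same for $PSL_n(q)$; reading off their conclusions yields the $\mathcal{C}_1/\mathcal{C}_3/\mathcal{C}_8$ possibilities of case~1 together with the finitely many exceptional configurations. These exceptions are then sorted into the stated cases: the small linear groups of case~2 (where Zsigmondy fails, or an extra non-geometric subgroup appears), the isolated group $PSU_4(2)$ of case~3, the exceptional maximal subgroups of case~4 (mostly of Aschbacher class $\mathcal{S}$, such as $PSL_2(7)<PSU_3(3)$ or $M_{22}<PSU_6(2)$), and the triality-related $P\Omega_7(q)$ and $PSp_6(q)$ overgroups of $P\Omega_8^+(q)$ in case~5. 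Each of these is a subgroup that the general ppd machinery flags explicitly rather than one that the module-theoretic argument above rules in.

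The main obstacle is not any single calculation but the bookkeeping of the exceptional cases. One must confirm that the Zsigmondy exceptions ($d=2,\,n=6$ and $d$ a Mersenne prime with $n=2$) together with the small-field coincidences produce \emph{all and only} the groups listed in cases~2--5, and that in every remaining instance the chosen $x$ genuinely lies in the claimed maximal subgroup. The delicate part is matching this against the tables of \cite{MSW} and \cite{Be1} while keeping the unitary and orthogonal exponents consistent with their conventions --- in particular the passage to $q^2$ in the unitary groups and the $q^{n-1}+1$ versus $q^n+1$ distinction in the non-Singer rows. The underlying group-theoretic classification itself is taken as given from the cited papers.
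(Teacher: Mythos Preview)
Your proposal is correct and takes essentially the same approach as the paper: the paper does not give an independent proof of this theorem at all but explicitly presents it as a summary of \cite{Be1} (for $PSL_n(q)$) and \cite{MSW}, Theorem~1.1 (for the remaining classical groups), which is exactly the reduction you carry out. Your added explanation of why the chosen $x$ is a ppd-element and how its irreducibility pattern predicts the $\mathcal{C}_1/\mathcal{C}_3/\mathcal{C}_8$ dichotomy is more than the paper provides, but is consistent with the remarks preceding the theorem there.
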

\vspace{6mm}
Now it is relatively easy to prove that the Steinberg character of a non-abelian simple group of Lie type cannot be QSI (with a small number of exceptions).\\
First we consider the exceptional cases:
\begin{enumerate}
\item $S = PSL_n(q)$, with $(n,q) \in \left\{(2,5), (2,7), (2,9), (3,4)\right\}$\\
$PSL_2(5) \cong A_5$ has already been treated.\\
In $PSL_2(9) \cong A_6$, as the Steinberg character takes non-zero values on all elements of order prime to 3, we need a subgroup with order divisible by 20. But that only leaves $A_5$, which is non-abelian simple.\\
In $PSL_3(4)$ there are no subgroups containing elements of orders 3,5 and 7 simultaneously.\\
Finally, $PSL_2(7) \cong PSL_3(2)$ has two Steinberg characters. However, both are monomial, so here for once we have to deviate with our choice for $\chi$. There is a unique character of degree 6, with non-zero values on elements of order 2 and 7, and a maximal subgroup with order divisible by 14 does not exist.
\item In $S = PSU_4(2)$, no proper subgroup contains elements of orders 9 and 5 simultaneously.
\item  $S = PSU_3(3), M = PSL_2(7)$. Then $M$ is non-abelian simple, so we have to look only at proper subgroups, none of which contain both the divisors 2 and 7.\\
	 $S = PSU_3(5), M = A_7$. Here $S$ contains elements of order 8, $A_7$ obviously does not.\\
	 $S = PSU_5(2), M = PSL_2(11)$. Once again we need to descend further, and no proper subgroup of $PSL_2(11)$ contains the divisors 3, 5 and 11 simultaneously. \\
	 $S = PSU_4(3)$, $M \in \left\{A_7, PSL_3(4)\right\}$: In both cases elements of order 8 are missing.\\
	 $S = PSU_6(2)$, $M = M_{22}$: Elements of order 15 are not contained.\\
	$S = P\Omega_7(3)$, $M = S_9$: $|S|$ is divisible by 13. \\
	$S = PSp_8(2)$, $M = PSL_2(17)$: Here, the prime divisors 5 and 7 of $|S|$ are missing in $|M|$.\\
	 $S = PSp_4(3)$, $M = 2^4.A_5$: Note that $PSp_4(3) \cong PSU_4(2)$ \footnote{This change of characteristic is important, as the 3-Steinberg character is indeed QSI from a subgroup of $M$ of order 160}.\\
 	$S = P\Omega_{8}^+(2)$, $M = A_9$: No proper subgroup of $A_9$ contains elements of orders 7 and 9 simultaneously.
\item  $S = P\Omega_8^+(q)$, and either $soc(M) \cong P\Omega_7(q)$ with $q$ odd, or $soc(M) \cong PSp_6(q)$ with $q$ even:\\
This is a special case only with regard to Aschbacher classification, but not for our purposes, as we only need order arguments. Hence these cases can be treated like the standard ones, which we will consider from now on.
\end{enumerate}
It often suffices to use order arguments in the following way: many of the maximal overgroups $M$ of $x$ lose primitive prime divisors of $q^d-1$ for some $d \in \mathbb{N}$, so they are not in question for inducing the Steinberg character. In the few cases where the respective primitive prime divisor does not exist, the structure of $M$ may show that certain elements cannot be contained. A similar argumentation is already contained in \cite{MSW}, pp. 110 ff.\footnote{However in some cases we have to argue in a different way as \cite{MSW}, as we must not use any $p$-singular elements to generate $S$}\\  \\
Additionally we can use another remark about the Steinberg character $St$ of a Lie type group:
\begin{lemma}
Let $S = S(n,p^r)$ be a simple group of Lie type with defining characteristic $p$, and assume that its Steinberg character $St$ is QSI from $U$ and $\varphi$. Then $p \not| \  |ker(\varphi)|$.
\end{lemma}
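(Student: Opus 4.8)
The plan is to avoid any appeal to the vanishing of $St$ on $p$-singular classes and instead to extract everything from a single $p$-adic valuation count applied to the degree equation. Writing $v_p$ for the $p$-adic valuation, I would start from the defining relation $k\cdot St=\varphi^S$ of the QSI-property and evaluate it at the identity. Since $St(1)=|S|_p$, this gives $k\cdot|S|_p=[S:U]\cdot\varphi(1)$, an identity of positive integers from which all valuations can be read off.

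Next I would compute $v_p(\varphi(1))$. Rewriting the degree relation as $k\,|S|_p=\tfrac{|S|}{|U|}\,\varphi(1)$ and taking valuations, and using $v_p(|S|_p)=v_p(|S|)$, the $v_p(|S|)$ contributions cancel and I obtain $v_p(\varphi(1))=v_p(k)+v_p(|U|)$. In particular $v_p(\varphi(1))\ge v_p(|U|)$, i.e. the full $p$-part of $|U|$ already divides $\varphi(1)$.

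The second ingredient is the elementary fact that an irreducible character degree divides the index of its kernel: since $\varphi$ is inflated from a faithful irreducible character of $U/ker(\varphi)$, we have $\varphi(1)\mid [U:ker(\varphi)]$, hence $v_p(\varphi(1))\le v_p(|U|)-v_p(|ker(\varphi)|)$. Combining the two estimates gives $v_p(k)+v_p(|U|)\le v_p(|U|)-v_p(|ker(\varphi)|)$, that is, $v_p(k)+v_p(|ker(\varphi)|)\le 0$. As both summands are non-negative, both must vanish; in particular $v_p(|ker(\varphi)|)=0$, which is exactly $p\nmid|ker(\varphi)|$ (and, as a by-product, one even gets $p\nmid k$).

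The only points requiring care are the two standard facts being invoked, namely $St(1)=|S|_p$, already recalled in this section, and the divisibility $\varphi(1)\mid[U:ker(\varphi)]$, so I do not expect a genuine obstacle here; the argument is purely arithmetic. What is perhaps worth flagging is that this particular statement needs neither Aschbacher's classification nor the vanishing of $St$ on $p$-singular elements: those tools enter only in the subsequent case-by-case elimination of the maximal overgroups of $x$, whereas the constraint $p\nmid|ker(\varphi)|$ is already forced by degrees alone.
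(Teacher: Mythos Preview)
Your argument is correct and follows essentially the same route as the paper: from $k\cdot St(1)=[S:U]\cdot\varphi(1)$ together with $St(1)=|S|_p$ and $\varphi(1)\mid[U:ker(\varphi)]$ one gets $|S|_p\mid[S:ker(\varphi)]$, hence $p\nmid|ker(\varphi)|$; you have simply unpacked the divisibility chain into $p$-adic valuations. The only addition is your by-product $p\nmid k$, which the paper does not state but which drops out of the same computation.
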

\begin{proof}
$|S|_p = St(1) \ | \ [S:U] \cdot \varphi(1) \ | \ [S:ker(\varphi)]$, and the assertion follows.
\end{proof}
\ \\
This will enable us to reduce certain cases to previous ones:
E.g., if a maximal overgroup of $x$ has a (non-abelian simple) Lie-type socle over the same characteristic as the group $S$, with conjugates of $x$ contained only in the socle, then we can descend to extensions of maximal subgroups of this socle.
\\ \\
\begin{itemize}
\item $S = PSL_n(q)$, and $|M|$ divides $|GL_{n/r}(q^r).r|$.\footnote{see \cite{KL}, chapter 4, for the precise isomorphy types of the groups $M$} Here we lose a primitive prime divisor of $q^{n-1}-1$, unless $n \leq 3$, $(n,q) = (7,2)$, or the case that $r = n$ is this prime divisor. In the last case (with $n>3$) we certainly lose a primitive prime divisor of $q^{n-2} - 1$, and $PSL_7(2)$ can be checked directly.\\
If $n=3$, $|M|$ divides $3(q^3-1)$, and therefore cannot contain the homomorphic image of a cyclic torus of $SL_n(q)$ of order $q^2-1$, unless $\frac{q+1}{(3, q-1)} \ | \ 3$, i.e. $q=2$; but in $PSL_3(2)$ we have already found a different non-QSI character.\\
Finally, if $n=2$ the dihedral group $M$ contains no elements of order $\frac{q-1}{(2,q-1)}$, unless $q \leq 5$, in which case we end up with $S = A_5$ or a solvable group.
\item Similarly, the subgroups of type $O_{2n/r}^-(q^r)$ or $GU_{n}(q)$ in $S = P\Omega_{2n}^-(q)$ cannot contain a primitive prime divisor of $q^{2n-2}-1$, and for the only Zsigmondy exception $(n,q) = (4,2)$ we can check directly that no maximal subgroup has order divisible by 7 and 17.
\item For $S = P\Omega_{2n+1}(q)$, $M$ of type $O_1(q) \bot O_{2n}^-(q)$, there are homomorphic images of cyclic tori of order $q^n-1$ that are no longer contained in $M$.\\
(Alternatively, in $M = \Omega_{2n}^{-}(q).2$ we can use the descent argument mentioned above\footnote{note that the torus $\langle x \rangle$ contains the center of $\Omega_{2n}^{-}(q)$, so the overgroups of $x$ correspond to those considered above for $P\Omega_{2n}^{-}(q)$}, and reduce to the previous case)
\item In case $PSp_{2n}(q)$, the $\mathcal{C}_3$-subgroups lose primitive prime divisors of $q^{2n-2}-1$, with Zsigmondy exceptions only for $n=2$ or $(n,q) = (4,2)$. Here we only have to consider the groups of type $Sp_{2n/r}(q^r)$, as the other type appears only for $n$ odd. Now $M = PSp_4(4).4 \leq PSp_8(2)$ loses the prime factor 7, while $PSp_2(q^2).2 \cong PSL_2(q^2).2$ always has a non-abelian simple socle, hence the usual descent argument applies.\\
The $\mathcal{C}_8$-case yields $M = \Omega_{2n}^{-}(q).2$, as in the previous case.
\item In $PSU_n(q)$, with $n$ odd, the $C_3$-subgroups $M$ lose a primitive prime divisor of $q^{2(n-2)}-1$, with Zsigmondy exceptions only for $(n,q) = (5,2)$ (which once again can be excluded directly), or $n = 3$ and $q$ a Mersenne prime. But here $(|M|, q+1) = (3 \frac{q^3+1}{q+1}, q+1)$ divides 3, so for QSI-property, $q+1$ would have to be a power of 3, which cannot hold for $q$ a Mersenne prime.\\
If $n$ is even, the $\mathcal{C}_1$-subgroups $M$ lose a factor $q^n-1$, i.e. a primitive prime divisor of $q^n-1$ or $q^{n/2}-1$ is missing, depending on whether 4 does or does not divide $n$. There are no Zsigmondy exceptions here as the exponent is either odd or divisible by 4.
\item $S = P\Omega_n^+(q)$, the groups of type $O_1(q) \bot O_{2n-1}(q)$ or $O_2^-(q) \bot O_{2n-2}^-(q)$ can be ruled out by the descent argument, as conjugates of $x$ are only contained in the nonsolvable component. The same argument applies for the exceptional cases in $S = P\Omega_8^+(q)$.\\
In the $\mathcal{C}_3$-subgroups of type $GU_{n}(q)$ (which occur only for $n$ even), we lose a primitive prime divisor of $q^{n-1}-1$, and similarly in those of type $O_n(q^2)$ (only for $n$ odd) a primitive prime divisor of $q^{2(n-2)}-1$, with the one exception $(n,q) = (5,2)$, where we can check directly that no proper subgroup has order divisible by 31 and 17.
\end{itemize}
\vspace{10mm}
\section{Proof for exceptional Lie type groups}
For the exceptional Lie type groups we choose elements $x$ as generators of certain maximal tori. To be more precise, we follow Weigel (\cite{W1}, table 1), and choose the cyclic maximal tori given there together with their respective normalizers \footnote{for the cyclic structure of these tori see e.g. the lists in \cite{Ka2}, chapter 2}. Let $\widehat{x}$ be the corresponding element in the simply-connected version $\widehat{S} = S_{sc}$ of $S$ (which is in many cases the same as $S$ here). \\
\begin{table}
\caption{Choice of elements in exceptional Lie type groups}
\label{exctab}
\renewcommand{\arraystretch}{1.3}
		\begin{tabular}[h]{|l|c|}
		\hline
			{S} & {$ord(\widehat{x})$}\\
			\hline
			${^2}B_2(q)$ & $q + \sqrt{2q} + 1$ \\
			${^2}G_2(q)$ & $q + \sqrt{3q} + 1$ \\
			${^2}F_4(q)$ & $q^2 + \sqrt{2q^3} + q + \sqrt{2q} + 1$ \\
			$G_2(q)$ & $q^2-q +1$ \\
			${^3}D_4(q)$ & $q^4-q^2+1$ \\
			$F_4(q)$ &  $q^4-q^2+1$ \\
			$E_6(q)$ & $q^6+q^3+1$ \\
			${^2}E_6(q)$ & $q^6-q^3+1$ \\ 
			$E_7(q)$ & $(q+1)(q^6-q^3+1)$ \\
			$E_8(q)$ & $q^8+q^7-q^5-q^4-q^3+q+1$ \\
			\hline
		\end{tabular}
		\end{table}	
\vspace{4mm}	
\begin{lemma} Let $S$ be one of the groups $^3D_4(q)$, $^2B_2(q)$ ($q=2^{2n+1}, n \geq 1$), $^2F_4(q)$ ($q=2^{2n+1}, n \geq 1$), $^2G_2(q)$ ($q=3^{2n+1}, n \geq 1$), $G_2(q)$ ($q>2$).\\
Let $S^m \leq G \leq Aut(S^m)$.\\
Then $G$ is QSI.
\end{lemma}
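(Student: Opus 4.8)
The plan is to check the QSI property directly from the fully known ordinary character tables of these families (Suzuki and Ree in the rank one cases, Chang--Ree/Enomoto for $G_2(q)$, Deriziotis--Michler for ${}^3D_4(q)$, Malle for ${}^2F_4(q)$), together with the fact that every maximal torus $T$ in Table \ref{exctab} is cyclic with a \emph{solvable} normalizer $N_S(T)$. First I would reduce from $G$ to $S$. Since $N = S^m$ and $G \leq Aut(S)\wr Sym(m)$ with $G/N$ solvable, a solvable inducing subgroup for an irreducible character $\varphi$ of $S$ gives, by tensoring over the $m$ factors, one for each irreducible $\varphi_1\otimes\cdots\otimes\varphi_m$ of $N$; Clifford theory over the solvable quotient $G/N$ then lifts this to the irreducible characters of $G$ lying above it. Thus it suffices to prove that every $\varphi \in Irr(S)$ is QSI inside $S$, i.e.\ to exhibit a solvable $U \leq S$ and $\psi \in Irr(U)$ with $\psi^S = k\varphi$ and $U/ker(\psi)$ solvable.

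Next I would partition $Irr(S)$ by Deligne--Lusztig theory. The principal series characters are constituents of $(1_B)^S$ and are produced by inducing suitable characters of the Borel subgroup $B$, which is solvable. For a regular semisimple character attached to one of the cyclic tori $T$ of Table \ref{exctab} (a discrete series character, equal up to sign to $R_T^\theta$ with $\theta$ in general position), the natural candidate is the solvable normalizer $N_S(T)$: I would show that inducing the linear extension $\hat\theta$ of $\theta$ to $N_S(T)$ yields, up to a positive multiple $k$, the single irreducible $\pm R_T^\theta$. Here the key step is a Mackey/Frobenius evaluation of $\|\hat\theta^{\,S}\|$, using that $\theta$ has trivial stabilizer in the cyclic quotient $N_S(T)/T$. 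It is precisely because these families have small rank, so that the relevant tori are cyclic with solvable normalizers, that this catches every regular character.

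The hard part will be the unipotent characters, above all the Steinberg character $St$. No torus normalizer can serve as its inducing subgroup: by the intersection criterion of Section 2 the inducing subgroup must meet \emph{every} class on which the character is nonzero, and $St$ is nonzero on all $p$-regular classes simultaneously. My plan is therefore to realize a multiple of $St$ through the solvable Borel $B = U \rtimes T$, starting from the regular character of the unipotent radical $U$ (or a Gelfand--Graev character) and showing, with multiplicities read off from the explicit tables, that after multiplication by a suitable $k$ the induced character collapses onto a pure multiple of $St$; the few cuspidal unipotent characters of $G_2(q)$, ${}^3D_4(q)$ and ${}^2F_4(q)$ would then be matched individually against their tables. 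I expect this Steinberg/cuspidal-unipotent step to be the main obstacle, since for larger groups such an induction from $B$ splits into many constituents, and it is exactly the special arithmetic of the families ${}^2B_2(q)$, ${}^2G_2(q)$, ${}^2F_4(q)$, $G_2(q)$, ${}^3D_4(q)$---rather than the generic Zsigmondy and order arguments used for the other Lie type groups---that one would have to exploit to force the induced character onto a single multiple of $St$.
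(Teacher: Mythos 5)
Your proposal sets out to prove the lemma's conclusion as printed, but ``Then $G$ is QSI'' is a misprint for ``Then $G$ is \emph{not} QSI''. As printed the statement would contradict Theorem \ref{Hauptsatz} itself (such a $G$ contains the nonsolvable group $S^m$, so a QSI $G$ would be a counterexample to the main theorem, not a step in its proof), it is out of step with the parallel lemma for $E_6(q)$, ${}^2E_6(q)$, $E_7(q)$, $E_8(q)$, $F_4(q)$ (``Then $G$ is not QSI''), and the paper's own proof establishes the negative statement. That proof is short: by the known classifications of maximal subgroups (Suzuki, Cooperstein, Kleidman, Malle) together with Zsigmondy, the only maximal overgroup of the torus generator $x$ from table \ref{exctab} is $N_S(\langle x\rangle)$, of order $4\cdot ord(x)$, $6\cdot ord(x)$ or $12\cdot ord(x)$ (respectively $SU_3(q).2$ for $G_2(q)$, with $q\in\{3,4\}$ checked by hand); these subgroups miss elements of further orders prime to the defining characteristic $p$ (divisors of $q-1$ in ${}^2B_2(q)$, odd divisors of $q^6-1$ in ${}^3D_4(q)$, odd divisors of $q-1$ in ${}^2G_2(q)$, divisors of $q^3+1$ in ${}^2F_4(q)$, primitive prime divisors of $q^3-1$ in $G_2(q)$). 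Since the Steinberg character is nonzero on \emph{all} $p'$-elements, the intersection criterion of Section 2 shows $St$ is not QSI, and the descent lemmas of Section 2 then refute the QSI property of $G$. So the statement you are trying to prove is false, and the attempt must break somewhere.

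It breaks exactly where you yourself predicted difficulty, and the break is irreparable rather than technical. If $k\,St=\psi^S$ with $\psi\in Irr(U)$ and $U/\ker(\psi)$ solvable, then $U$ meets every class on which $St$ is nonzero, i.e.\ every $p'$-class; in particular $U$ contains a conjugate of $x$, hence lies (up to conjugacy) in $N_S(\langle x\rangle)$ or, for $G_2(q)$, in $SU_3(q).2$, and then $U$ cannot meet the $p'$-classes listed above --- a contradiction, so no inducing subgroup for $St$ exists at all. Your Borel route fails even earlier: $|B|=|S|_p\cdot|T_0|$ with $T_0$ the maximally split torus (e.g.\ $|B|=q^2(q-1)$ in ${}^2B_2(q)$, while $ord(x)=q+\sqrt{2q}+1$ is coprime to this), so $B$ contains no conjugate of $x$, every character induced from $B$ vanishes on the class of $x$ where $St$ does not, and no Gelfand--Graev manipulation can repair that. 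Two further steps are also unsupported: the claim that $\hat\theta^{\,S}$ induced from the solvable normalizer $N_S(T)$ collapses to a multiple of the single irreducible $\pm R_T^\theta$ is false in general (the Mackey norm computation you sketch bounds multiplicities but does not exclude constituents outside the discrete series), and the opening reduction --- lifting QSI from $Irr(S^m)$ to $Irr(G)$ through the solvable quotient --- is asserted without proof and runs in the opposite direction to the paper's Section 2 lemmas, which only \emph{descend} the QSI property from $G$ to $Aut(S)$-invariant characters of a normal subgroup. Read with the corrected conclusion, the lemma needs none of the character-table machinery you invoke: the maximal-overgroup and element-order arguments above suffice, which is precisely how the paper argues.
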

\begin{proof}
For all these groups the structure of maximal subgroups is known (see \cite{Co1} (Theorems 2.3 and 2.4), \cite{Kl1} (Theorems A and C), \cite{Kl2} (Theorem 1), \cite{Ma1} (Main Theorem), \cite{Su1} (Theorem 9)).\\
\\
In the cases ${^3}D_4$, ${^2}B_2$, ${^2}G_2$ and ${^2}F_4$, due to the choice of $x$ and Zsigmondy, the only maximal overgroup of $x$ is $N_S(x)$.\\
This group has the following order:
\begin{itemize}
	\item $4 \cdot ord(x)$ in ${^3}D_4(q)$ and ${^2}B_2(q)$
	\item $6 \cdot ord(x)$ in ${^2}G_2(q)$
	\item $12 \cdot ord(x)$ in ${^2}F_4(q)$.\\ 
\end{itemize}
Once again it's easy to see that certain primes other than the field characteristic cannot be contained (any divisor of $q-1$ in case ${^2}B_2(q))$, any odd divisor of $q^6-1$ in case ${^3}D_4(q)$, any odd divisor of $q-1$ in case ${^2}G_2(q)$, any divisor of $q^3+1$ in case ${^2}F_4(q)$).
In case $G_2(q)$, for $n \geq 5$, \cite{W1}, 4d), yields $SU_3(q).2$ as the only maximal subgroup of $x$, and here no odd prime divisor of $q^3-1$ is contained.
\\
This leaves the cases $q \in \left\{3,4\right\}$.\\
One checks directly that $G_2(4)$ has no maximal subgroups containing each of the prime divisors 3, 5, 7 and 13.
$G_2(3)$ does have maximal subgroups containing all the necessary prime divisors, namely such of type $PSL_2(13)$, but elements of order 8 are missing here.
\end{proof}
\vspace{5mm}
\begin{lemma}
Let $S$ be one of $E_6(q)$, $^2E_6(q)$, $ E_7(q)$, $E_8(q)$, $F_4(q)$.\\
Let $S^m \leq G \leq Aut(S^m)$.\\
Then $G$ is not QSI.
\end{lemma}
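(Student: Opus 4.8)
The plan is to use, once more, the Steinberg character $St$ of $S$ as the test character. Since $St$ is the unique irreducible character of degree $|S|_p$, and automorphisms of $S$ permute $Irr(S)$ preserving degrees, $St$ is $Aut(S)$-invariant; by the reduction of Section 2 it therefore suffices to prove that $St$ is not QSI as a character of $S$. Recall in addition that $St(g) \neq 0$ for every $p'$-element $g$ of $S$. So I would assume, for contradiction, that $St$ is QSI from some $U \leq S$ and $\varphi \in Irr(U)$. By the QSI class-intersection lemma, $U$ must then meet the conjugacy class of every element on which $St$ does not vanish, in particular the class of every $p'$-element of $S$.

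Next I would bring in the element $x$ of Table \ref{exctab}. In the five cases its order is, up to the extra factor $q+1$ occurring for $E_7$, a single cyclotomic value $\Phi_d(q)$, with $d = 9, 12, 18, 18, 30$ for $E_6, F_4, {}^2E_6, E_7, E_8$ respectively; these are exactly the Coxeter number or a Springer regular number of $S$, so $\langle x\rangle$ is a cyclic, self-centralizing maximal torus. By Zsigmondy's theorem $ord(x)$ is divisible by a primitive prime divisor $p_d$ of $q^d-1$, and by Lemma \ref{zslem} this $p_d$ is large and divides $q^m-1$ only when $d \mid m$. Because $p_d$ is such a large primitive prime, the maximal subgroups of $S$ containing a conjugate of $x$ are severely restricted; I would read them off from Weigel (\cite{W1}, table 1) (cf.\ also \cite{KL}), where for these tori the list consists essentially of the torus normalizer $N_S(\langle x\rangle)$ together with a short list of subsystem and subfield subgroups. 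Since $U$ meets the class of $x$, it is contained in one of these maximal overgroups $M$.

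It then remains to rule out each such $M$, which I would do by an order argument. For every $M$ on the list I would produce a second $p'$-element $y$ whose order cannot divide $|M|$: concretely, choose a second cyclotomic index $d' \neq d$ occurring in $|S|$ whose primitive prime divisor $p_{d'}$ of $q^{d'}-1$ does not divide $|M|$. Since $p_{d'} \equiv 1 \pmod{d'}$ is large and, by Lemma \ref{zslem}, divides $\Phi_m(q)$ only for $d' \mid m$, this holds as soon as $\Phi_{d'}(q) \nmid |M|$, which is visible from the order of each $M$ on Weigel's list. As $St(y) \neq 0$, the class-intersection lemma forces a conjugate of $y$ into $U \leq M$, contradicting $p_{d'} \nmid |M|$. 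When a maximal overgroup $M$ has a non-abelian simple Lie-type socle over the same characteristic $p$, I would instead invoke the lemma that $p \nmid |ker(\varphi)|$ to descend to the maximal subgroups of that socle, exactly as in the classical section, and repeat the order argument one level down.

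The main obstacle is arithmetic bookkeeping rather than any single hard idea: one must verify that for each of the five families a suitable ``second index'' $d'$ really is absent from the orders of \emph{all} maximal overgroups of $x$, and separately dispose, by direct inspection of the known maximal-subgroup lists, of the finitely many Zsigmondy exceptions and of the smallest values of $q$, for which the generic primitive-prime-divisor argument can fail.
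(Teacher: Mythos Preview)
Your proposal is correct and follows the same approach as the paper's own proof: the Steinberg character, the torus element $x$ of Table~\ref{exctab}, Weigel's list of maximal overgroups of $x$, and then a missing-prime order argument against each such overgroup. The only notable refinement in the paper is that for the residual cases $F_4(2)$ and $E_7(2)$ (where the generic argument fails) it does not inspect the maximal-subgroup lists directly as you suggest, but instead switches to a \emph{different} element $x$ (of order $17$, resp.\ $129$) whose sole maximal overgroups are classical ($Sp_8(2)$, resp.\ $SU_8(2)$) and then invokes the descent lemma to reduce to the already-completed classical section.
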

\begin{proof}
We gather from \cite{W1}, chapter 4, f)-j), (or, for the few cases excluded in \cite{W1}: from \cite{GK}, table 4) the maximal subgroups  $M$ of $S$ containing a cyclic torus $\left\langle x\right\rangle$, as chosen in table \ref{exctab}. \\
In case $S = F_4(q)$, for $q \neq 2$, it always holds that $M \cong {^3}D_4(q).3$.\\
In case $S = E_6(q)$ we have $|M| = \frac{1}{(3,q-1)} \cdot |SL_3(q^3).3| = 3 \cdot q^{9} \cdot (q^9-1)(q^6-1)$ \ 
In case $S = {^2}E_6(q)$ we get $|M| = \frac{1}{(3,q+1)} \cdot |SU_3(q^3).3| = 3 \cdot q^{9} \cdot (q^9+1)(q^6-1)$.
\\
For $S = E_7(q)$, and $q \neq 2$, we get $|M| = \frac{1}{(2,q-1)} \cdot |(\mathbb{Z}_{q+1}.{^2E}_6(q)).2|$, \\ 
and in case $S = E_8(q)$ it always holds that $|M| = |N_G(\left\langle x\right\rangle)| = 30 \cdot |\left\langle x\right\rangle|$.
\\
In each case one easily obtains some missing prime divisors just by checking the orders of the groups $S$.
\\
\\
The last remaining cases are $S = F_4(2)$ and $E_7(2)$.\\
Here we deviate from the standard choice in table \ref{exctab} for our choice of elements $x$:\\
In $E_7(2)$, by \cite{GK}, tab.4, there is a conjugacy class of elements $x$ of order $129 = 2^7+1$, contained only in maximal subgroups of type $SU_8(2) \cong PSU_8(2)$ - namely Singer elements of the unitary group $GU_7(2) \leq SU_8(2)$.\\
Analogously, in $F_4(2)$ there is a conjugacy class of self-centralizing elements of order $17 = 2^4+1$, contained only in maximal subgroups of type $Sp_8(2) \cong PSp_8(2)$  (namely Singer elements of these groups).\\
\\
As the subgroups are also over characteristic 2, we can use the results from the "'classical group"' section.
\end{proof}
\vspace{6mm}
Now, by the classification of finite simple groups, all cases have been treated, and theorem \ref{Hauptsatz} has been proven.\\
\\
After checking the few cases where a simple group can be a Lie type group in different defining characteristics, or an alternating and Lie type group at the same time (cf. \cite{MSW}, Prop. 2.9.1), we can also summarize the results about Steinberg characters in the following
\begin{lemma}
Let $S = S(n, p^r)$ be a finite non-abelian simple group of Lie type in defining characteristic $p$. Then its Steinberg character is QSI if and only if $S = PSL_2(5) (\cong A_5)$, $PSL_2(7)$, $PSL_3(2)$ or $PSp_4(3)$.\footnote{This last case is in fact an example for a properly quasi-monomial character, i.e. $2St$ is monomial, but $St$ is not}.
\end{lemma}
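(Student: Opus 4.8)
The plan is to obtain this lemma as a bookkeeping summary of the case analyses carried out in Sections~5 and~6, supplemented by a direct verification of the four exceptional groups. For the ``only if'' direction I would observe that in each of those sections the failure of the QSI-property was witnessed by an $Aut(S)$-invariant irreducible character that is not QSI, and that in all but finitely many cases this witness was the Steinberg character $St$ itself. Consequently, for every simple group of Lie type $S$ outside the tabulated list, the arguments already given show \emph{directly} that $St$ is not QSI. The only places where a character other than $St$ had to be used are small exceptional groups, chiefly $PSL_3(2)\cong PSL_2(7)$ (whose Steinberg characters turn out to be monomial) and the Tits group $Ti={}^2F_4(2)'$ (which has no unique Steinberg character of degree $|Ti|_2$ and was treated among the sporadic groups); so essentially nothing further is needed for the ``only if'' direction beyond checking the finitely many coincidences between Lie-type structures.

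For the ``if'' direction I would exhibit a concrete (quasi-)monomial induction of $St$ in each of the four cases. In $PSL_2(5)\cong A_5$ the Steinberg character has degree $5$ and equals $\varphi^{A_5}$ for a nontrivial linear character $\varphi$ of the index-$5$ subgroup $A_4$; since $A_4/\ker\varphi\cong C_3$ is solvable, $St$ is monomial, hence QSI (the identity $\varphi^{A_5}=St$ is immediate from Frobenius reciprocity after decomposing $St_{|A_4}$). In $PSL_2(7)\cong PSL_3(2)$ the two Steinberg characters, of degrees $7$ and $8$ (for characteristics $7$ and $2$ respectively), are both monomial, i.e. induced from linear characters of solvable subgroups. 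Finally, in $PSp_4(3)\cong PSU_4(2)$ the characteristic-$3$ Steinberg character of degree $81$ is \emph{properly} quasi-monomial: there is a subgroup $U$ of order $160$ and a $\varphi\in Irr(U)$ with $U/\ker\varphi$ solvable such that $2\cdot St=\varphi^{S}$, so here $k=2$ and $St$ is QSI without being monomial. Each of these assertions is readily checked against the character tables of \cite{ATLAS} and confirmed with GAP or MAGMA.

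The step I expect to be the main obstacle is the interplay between the defining characteristic and the exceptional isomorphisms, because one abstract simple group may carry different Steinberg characters for different characteristics and these need not share the same QSI-status. The sharpest instance is $PSU_4(2)\cong PSp_4(3)$: the characteristic-$2$ Steinberg (used in Section~5 to rule out QSI, as no proper subgroup contains elements of both orders $9$ and $5$) is not QSI, while the characteristic-$3$ Steinberg is. I would therefore enumerate all such coincidences via \cite{MSW}, Prop.~2.9.1 --- namely $PSL_2(4)\cong PSL_2(5)\cong A_5$, $PSL_2(9)\cong A_6$, $PSL_4(2)\cong A_8$, $PSL_2(7)\cong PSL_3(2)$ and $PSU_4(2)\cong PSp_4(3)$ --- and settle the QSI-status of the Steinberg character attached to \emph{each} characteristic separately. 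Running through this finite list one finds that the characteristic-$2$ Steinbergs of $PSL_2(4)$ (degree $4$), of $A_8\cong PSL_4(2)$ (degree $64$) and of $PSU_4(2)$ (degree $64$), as well as the characteristic-$3$ Steinberg of $A_6\cong PSL_2(9)$ (degree $9$), are all non-QSI; hence exactly the four tabulated (group, characteristic) pairs remain, which is the assertion.
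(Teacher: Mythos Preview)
Your proposal is correct and matches the paper's own treatment: the lemma is stated there explicitly as a bookkeeping summary of the arguments in Sections~5 and~6, with the coincidences between different Lie-type structures resolved via \cite{MSW}, Prop.~2.9.1, exactly as you outline. Your write-up is in fact more explicit than the paper's, which merely asserts that ``after checking the few cases\ldots'' the lemma follows, whereas you actually walk through the exceptional isomorphisms and the ``if'' direction case by case.
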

\vspace{15mm}
\newpage
\renewcommand{\appendixname}{Appendix: Finite groups of Lie type}
\begin{appendix}
\section{Finite groups of Lie type}
In table \ref{T2} we summarize the orders of the simply-connected versions $L_{sc}$ of the finite groups of Lie type, and the orders of their centers respectively. The group $L_{sc}/Z(L_{sc})$ is then, with only a few excpeptions, a simple group.
\\
\begin{table}
\caption{Finite groups of Lie type}
\label{T2}
\renewcommand{\arraystretch}{1.3}
		\begin{tabular}[h]{|l|c|c|}
		\hline
			{$L_{sc}/Z(L_{sc})$} & {$|L_{sc}|$} & {$|Z(L_{sc})|$}\\
			\hline
			$PSL_n(q)$ & $q^{\frac{n(n-1)}{2}} \cdot \prod_{i=2}^n{(q^i-1)}$ & $(n,q-1)$\\
			$PSp_{2n}(q)$ & $q^{n^2} \cdot \prod_{i=1}^n{(q^{2i}-1)}$ & $(2,q-1)$\\
			$PSU_n(q)$ & $q^{\frac{n(n-1)}{2}} \cdot \prod_{i=2}^n{(q^i-(-1)^i)}$ & $(n,q+1)$\\
			$P\Omega_{2n+1}(q)$ & $q^{n^2} \cdot \prod_{i=1}^n{(q^{2i}-1)}$ & $(2,q-1)$\\
			$P\Omega_{2n}^-(q)$ & $q^{n(n-1)} \cdot (q^n+1) \cdot \prod_{i=1}^{n-1}{(q^{2i}-1)}$ & $(4,q^n+1)$\\
			$P\Omega_{2n}^+(q)$ & $q^{n(n-1)} \cdot (q^n-1) \cdot \prod_{i=1}^{n-1}{(q^{2i}-1)}$ & $(4,q^n-1)$\\
			\hline
			${^2}B_2(q)$ & $q^2(q^2+1)(q-1)$, \ $q=2^{2n+1}$ & $1$\\
			${^2}G_2(q)$ & $q^3(q^3+1)(q-1)$, \ $q=3^{2n+1}$ & $1$\\
			${^2}F_4(q)$ & $q^{12}(q^6+1)(q^4-1)(q^3+1)(q-1)$, \ $q=2^{2n+1}$ & $1$\\
			$G_2(q)$ & $q^6(q^6-1)(q^2-1)$ & $1$\\
			${^3}D_4(q)$ & $q^{12}(q^8+q^4+1)(q^6-1)(q^2-1)$ & $1$\\
			$F_4(q)$ &  $q^{24}(q^{12}-1)(q^8-1)(q^6-1)(q^2-1)$ & $1$\\
			$E_6(q)$ & $q^{36}(q^{12}-1)(q^9-1)(q^8-1)(q^6-1)(q^5-1)(q^2-1)$ & $(3,q-1)$\\
			${^2}E_6(q)$ & $q^{36}(q^{12}-1)(q^9+1)(q^8-1)(q^6-1)(q^5+1)(q^2-1)$ & $(3,q+1)$\\ 
			$E_7(q)$ & $q^{63}(q^{18}-1)(q^{14}-1)(q^{12}-1)(q^{10}-1)(q^8-1)(q^6-1)(q^2-1)$ & $(2,q-1)$\\
			$E_8(q)$ & $q^{120}(q^{30}-1)(q^{24}-1)(q^{20}-1)(q^{18}-1)(q^{14}-1)(q^{12}-1)(q^8-1)(q^2-1)$ & $1$\\
			\hline
		\end{tabular}
		\end{table}
		\\
\vspace{4mm}
\end{appendix}

\end{document}